\newtheorem{theorem}{Theorem}[section]
\newtheorem{lemma}[theorem]{Lemma}
\newtheorem{proposition}[theorem]{Proposition}
\newtheorem{corollary}[theorem]{Corollary}
\theoremstyle{definition}
\newtheorem{remark}[theorem]{Remark}
\numberwithin{equation}{section}
\DeclareMathOperator{\End}{End}
\DeclareMathOperator{\Hom}{Hom}
\DeclareMathOperator{\Res}{Res}
\DeclareMathOperator{\ord}{ord}
\DeclareMathOperator{\aff}{aff}
\DeclareMathOperator{\GL}{GL}
\DeclareMathOperator{\SL}{SL}
\DeclareMathOperator{\prim}{prim}
\DeclareMathOperator{\Rep}{Rep}
\newcommand{\IEC}{\mathfrak{I}}
\DeclareMathOperator{\sgn}{sgn}
\DeclareMathOperator{\Lie}{Lie}
\DeclareMathOperator{\Gal}{Gal}
\DeclareMathOperator{\norm}{norm}
\DeclareMathOperator{\Tr}{Tr}
\renewcommand{\restriction}{|}
\newcommand{\tG}{{\widetilde G }}
\newcommand{\tM}{{\widetilde M }}
\newcommand{\tT}{{\widetilde T }}
\newcommand{\Krel}{{\mathcal{K}\textup{-rel}}}
\newcommand{\flength}{{\ell_{\Krel}}}
\newcommand{\isoarrow}{\stackrel{\sim}{\longrightarrow}}
\newcommand{\muT}{\mu^{\cT}}
\newcommand{\Wzero}{\Omega(\rho_{M})}
\newcommand{\Wzeroz}{\Omega(\rho_{M^0})}
\newcommand{\plength}{\ell_{\prim}}
\newcommand{\phm}{\phantom{-}}
\newcommand{\bC}{\mathbb{C}}
\newcommand{\bZ}{\mathbb{Z}}
\newcommand{\cB}{\mathcal{B}}
\newcommand{\cH}{\mathcal{H}}
\newcommand{\cO}{\mathcal{O}}
\newcommand{\cT}{\mathcal{T}}
\newcommand{\ff}{\mathfrak{f}}
\newcommand{\fp}{\mathfrak{p}}
\newcommand{\fs}{{\mathfrak{s}}}
\title[A depth-zero principal-series block...]%
{A depth-zero principal-series block whose Hecke algebra has a non-trivial two-cocycle}
\author{Jeffrey D. Adler}
\address{Department of Mathematics and Statistics\\
        American University\\
        4400 Massachusetts Ave NW\\
        Washington, DC 20016-8050, USA}
\email{jadler@american.edu} 
\author{Jessica Fintzen}
\address{Universität Bonn\\
	Mathematisches Institut\\
	Endenicher Allee 60\\
	53115 Bonn\\
	Germany}
\email{fintzen@math.uni-bonn.de}
\author{Kazuma Ohara}
\address{Max Planck Institute for Mathematics\\ Vivatsgasse 7, 53115 Bonn, Germany}
\email{kazuma@mpim-bonn.mpg.de}
\begin{document}

\externaldocument[HAI-][nocite]{Adler--Fintzen--Mishra--Ohara_Structure_of_Hecke_algebras_arising_from_types}[https://arxiv.org/pdf/2408.07801]
\externaldocument[HAIKY-][nocite]{Adler--Fintzen--Mishra--Ohara_Reduction_to_depth_zero_for_tame_p-adic_groups_via_Hecke_algebra_isomorphisms}[https://arxiv.org/pdf/2408.07805]

\begin{abstract}
Recently the authors have shown that every Hecke algebra
associated to a type constructed by Kim and Yu is isomorphic to a Hecke algebra
for a depth-zero type. 
An example in the literature has been suggested
as a counterexample to this result.
We show that the example is not a counterexample, and exhibit
some of its interesting properties, e.g., we show that a principal series, depth-zero type can have a Hecke algebra with non-trivial two-cocyle, a phenomenon that many did not expect could occur.
\end{abstract}

\maketitle

\section{Introduction}
Let $G$ denote a connected reductive group over a non-archimedean local field $F$.
The category $\Rep(G(F))$ of smooth, complex representations of $G(F)$
is a direct product of full subcategories
called ``Bernstein blocks'':
\[
\Rep(G(F))=\prod_{\fs\in\IEC(G)}\Rep^\fs(G(F)).
\]
Each of the blocks $\Rep^\fs(G(F))$
is equivalent to the category of unital right modules
over an algebra $\cH^\fs$.
Suppose that the category $\Rep^\fs(G(F))$ has
an associated \emph{type}, as defined by Bushnell and Kutzko
\cite{BK-types}, i.e., a compact open subgroup $K$ of $G(F)$
and an irreducible smooth representation $\rho$ of $K$ such that
a representation $\pi \in \Rep(G(F))$ belongs to $\Rep^\fs(G(F))$
if and only if every irreducible subquotient of $\pi$
contains $\rho$ upon restriction to $K$.
Then we can replace the algebra $\cH^\fs$
by the Hecke algebra $\cH(G(F),(K, \rho))$
of all compactly supported,
$\End_{\bC}(\rho)$-valued
functions on $G(F)$
that transform on the left and right
according to $\rho$.
That is, $\Rep^\fs(G(F))$ is equivalent to the category
of modules over $\cH(G(F),(K, \rho))$.

One of the present authors \cite{Fi-exhaustion} has shown that,
provided that $G$ splits over a tamely ramified extension of $F$ and the residual characteristic $p$ of $F$ is not too small,
the construction of Kim and Yu \cite{Kim-Yu}
provides types for every Bernstein block for $G(F)$.
Thus, under this mild tameness assumption, one can in principle understand
the category $\Rep(G(F))$ by understanding the structures
of all of the Hecke algebras that arise from the types constructed by Kim and Yu.

In the ``depth-zero'' case, the compact group $K$
contains a parahoric subgroup of $G(F)$, and the representation
$\rho$ is trivial on the pro-$p$ radical of this parahoric.
In the special case where $K$ is a parahoric subgroup,
Morris \cite[Theorem 7.12]{Morris} has described the structures of these Hecke algebras.
More generally, the authors
\cite[Theorem \ref{HAI-theoremstructureofheckefordepthzero}]{HAI}
have described the structures of all depth-zero types.

In the set-up of Kim and Yu (\cite{Kim-Yu}), a type $(K,\rho)$ for $G$ of positive depth
is constructed from a depth-zero type $(K^0,\rho^0)$ for a subgroup $G^0$ of $G$,
together with some additional data.
The authors have recently shown
\cite[Theorem \ref{HAIKY-theoremstructureofheckeforKimYu}]{HAIKY}
that the associated Hecke algebras
$\cH(G(F), (K, \rho))$ and $\cH(G^0(F),(K^0, \rho^0))$ are isomorphic, after twisting the construction by Kim and Yu by a quadratic character arising from \cite{FKS},
thus describing 
the structures of all Hecke algebras arising from Kim--Yu types.

In outline,
from the pair $(K,\rho)$, one constructs
a group $W^\heartsuit$,
and
a normal, affine reflection subgroup $W_{\aff}$ of $W^\heartsuit$.
Choosing a set $S$ of generating reflections for $W_{\aff}$,
one constructs a parameter function $q\colon S \longrightarrow \bC^\times$,
thus obtaining an abstract Hecke algebra $\cH(W_{\aff},q)$.
The choice of $S$ gives rise to a complement $\Omega$
to $W_{\aff}$ in $W^\heartsuit$.
A choice of a family $\cT$ of intertwining operators
gives rise to a $2$-cocycle
$\muT \colon \Omega \times \Omega \longrightarrow \bC^\times$.
One then obtains an isomorphism of $\bC$-algebras
\[
\cH(G(F),(K, \rho)) \isoarrow \bC[\Omega,\muT] \ltimes \cH(W_{\aff},q).
\]
That is, $\cH(G(F),(K, \rho))$ is isomorphic to a semidirect product
of our abstract Hecke algebra and the $\muT$-twisted group algebra of $\Omega$,
where the structure of multiplication between these two factors is controlled
by the conjugation action of $\Omega$ on $W_{\aff}$.

In relation to the above discussion, Roche \cite[\S4]{Roche:parabolic} and
Goldberg--Roche \cite[\S11.8]{GoldbergRoche-Hecke}
each illustrate some unusual phenomena by presenting
an example, that they attribute to Kutzko,
of a Hecke algebra $\cH := \cH(G(F), (K, \rho))$ for a particular block of $G = \SL_{8}$.
In this note,
we discuss this example, determine an attached depth-zero pair $(K^0, \rho^0)$ and describe the depth-zero algebra
$\cH^0 := \cH(G^0(F),(K^0, \rho^0))$
that corresponds to it via 
\cite[Theorem \ref{HAIKY-theoremstructureofheckeforKimYu}]{HAIKY} explicitly, as well as the closely related Hecke  algebra $\cH^{0,\circ} :=\cH(G^0(F),(K^{0, \circ}, \rho^0))$, where we replace $K^0$ by the parahoric subgroup $K^{0,\circ}$ contained in it.
We have several aims in doing so.
\begin{enumerate}[(a)]
\item
First, 
a
remark in \cite{GoldbergRoche-Hecke} that
$\cH$ cannot be isomorphic
to any of the intertwining algebras constructed by
Morris \cite{Morris} let several mathematicians believe that $\cH$ would provide a counterexample to 
\cite[Theorem \ref{HAIKY-theoremstructureofheckeforKimYu}]{HAIKY}.
Thus, we want to assure readers that this is not the case.
\item
Second, $\cH^0$ 
provides an example of a depth-zero Hecke algebra
where the associated affine reflection group is trivial,
the group $\Wzero$ is nonabelian and infinite,
and the cocycle $\muT$ is non-trivial.
In particular, $\cH^0$ is an example of a Hecke algebra attached to a depth-zero, principal-series block of a quasi-split group that requires a non-trivial 2-cocycle, something that was long believed not to exist. 
We believe that this example might be useful
for researchers in the future.
\end{enumerate}

\subsection*{Notation}

For a connected reductive group $G$ and a reductive subgroup $M$ of $G$, let $N_{G}(M)$, resp., $Z_{G}(M)$, denote the normalizer, resp., centralizer, of $M$ in $G$.

For a finite field extension $E/F$ and $A$ a linear algebraic group or a Lie algebra thereof defined over $E$, we write $\Res_{E/F}(A)$ for the Weil restriction of $A$ to $F$.

For a linear algebraic group $G$, we denote by $\Lie(G)$ the Lie algebra of $G$ and by $\Lie^{*}(G)$ the dual of $\Lie(G)$.
We also write $\Lie^{*}(G)^{G}$ for the subscheme of $\Lie^{*}(G)$ fixed by the coadjoint action of $G$ on $\Lie^{*}(G)$.
For a morphism $f \colon G \to H$ of algebraic groups, let 
\[
\Lie(f) \colon \Lie(G) \to \Lie(H)
\]
denote the morphism between their Lie algebras induced by $f$.

Suppose that $G$ is a connected reductive group defined over a non-archimedean local field $F$.
We denote by $\mathcal{B}(G, F)$ the enlarged Bruhat–Tits building of $G$.
For $x \in \mathcal{B}(G, F)$, let $G(F)_{x}$ denote the stabilizer of $x$ in $G(F)$.
For $r \in \mathbb{R}$ with $r \ge 0$, we also let $G(F)_{x, r}$, resp., $G(F)_{x, r+}$, be the Moy--Prasad filtration subgroup of $G(F)$ of depth $r$, resp., $r+$, associated to $x$ (see \cite{MR1253198, MR1371680}).
We use the analogous notation for the Lie algebra $\Lie(G)$ and its dual $\Lie^{*}(G)$, where $r$ is allowed to be any element of $\mathbb{R}$.

For a compact, open subgroup $K$ of $G(F)$ and an irreducible smooth representation $\rho$ of $K$, we denote by $\cH(G(F), (K, \rho))$ the Hecke algebra attached to $(K, \rho)$.
We refer to \cite[Section \ref{HAI-Hecke algebras and endomorphism algebras}]{HAI} for the precise definition of $\cH(G(F), (K, \rho))$.

Suppose that $K$ is a subgroup of a group $H$ and $h \in H$. We denote $hKh^{-1}$ by $^hK$. If $\rho$ is a representation of $K$, we write $^h\!\rho$ for the representation $x\mapsto \rho(h^{-1}xh)$ of $^hK$.

\section{The example}
In this section we introduce the example studied in this paper that we learned about from
Roche \cite[\S4]{Roche:parabolic} and
Goldberg--Roche \cite[\S11.8]{GoldbergRoche-Hecke},
who attribute it to Kutzko.
We present it, a type for the group $\SL_8$, in the language of Kim and Yu's construction of types.
Doing so then allows us to describe the associated depth-zero type for a smaller group $G^0$,
seeing directly that it fits into our set-up.

Let $F$ denote a non-archimedean local field with residue field $\ff$
of characteristic $p$ (assumed odd) and order $q$.
We fix a uniformizer $\varpi_F$ of $F$
and a square root $\sqrt{-1}$ of $-1$ in $\bC^\times$.
For any finite field extension $E/F$,
we denote by $\cO_E$ the ring of integers in $E$,
by $\fp_E$ the prime ideal in $\cO_E$, by $\Tr_{E/F} \colon E \to F$ the trace map, and by
$N_{E/F} \colon E^\times \to F^\times$
 the norm map.
Let $\ord$ denote the discrete valuation on $F^{\times}$ with the value group $\mathbb{Z}$.
 For any finite extension $E$ of $F$ , we also write $\ord$ for the unique extension of this valuation to $E^{\times}$.
We denote by $\ord^{\norm}_E \colon E^{\times} \twoheadrightarrow \bZ$ the normalized valuation on $E^{\times}$.

Let $\zeta$ be a primitive $(q - 1)$-st root of unity in $F$.
Assume that $4$ divides $q - 1$.
It follows that there exists a unique character $\eta \colon F^{\times} \to \mathbb{C}^{\times}$ that is trivial on $\varpi_{F}$ and $1 + \fp_{F}$ and satisfies $\eta(\zeta) = \sqrt{-1}$.
Let $E_{2}$ be the splitting field of the polynomial $X^{2} + \varpi_{F}$,
and $E_{4}$ the splitting field of the polynomial $X^{4} + \zeta \varpi_{F}$.
(Note that the fields that we denote by $E_{2}$ and $E_{4}$ here are denoted by $E_{1}$ and $E_{2}$, respectively, in \cite{Roche:parabolic}.)
Let $\varpi_{E_{2}}$, resp., $\varpi_{E_{4}}$,
denote a uniformizer of $E_{2}$, resp., $E_{4}$,
such that $\varpi_{E_{2}}^{2} = - \varpi_{F}$, resp., $\varpi_{E_{4}}^{4} = - \zeta \varpi_{F}$.
We fix a generator $\sigma_{2}$ of the Galois group $\Gal(E_2/F)$ and a generator $\sigma_{4}$ of the Galois group $\Gal(E_{4}/F)$.

We define the following reductive groups over $F$:
\begin{align*}
\tG  = \tG^2 &= \GL_8, \\
\tG^1 &= \Res_{E_2/F} (\GL_2) \times \GL_4, \\
\tG^0 &= \Res_{E_2/F} (\GL_2) \times \Res_{E_4/F} (\GL_1), \\
\tT = \tM^0 
&= \Res_{E_2/F} \left(
\GL_1 \times \GL_1
\right) \times \Res_{E_4/F} (\GL_1).
\end{align*}
We identify $\GL_{1} \times \GL_{1}$ with the diagonal torus of $\GL_{2}$ by the map
\(
(t_{1}, t_{2}) \mapsto 
(\begin{smallmatrix}
t_{1} & 0 \\
0 & t_{2}
\end{smallmatrix}),
\)
thus obtaining an
embedding $\tM^0 \hookrightarrow \tG^0$.
Fix isomorphisms
$F^{\oplus 2} \cong E_2$
and
$F^{\oplus 4} \cong E_4$
of vector spaces over $F$,
thus determining isomorphisms
$$
F^{\oplus 8}
\quad \cong \quad
E_2^{\oplus 2} \oplus F^{\oplus 4}
\quad \cong \quad
E_2^{\oplus 2} \oplus E_4.
$$
These choices determine embeddings of $F$-groups
\(
\tG^0
\hookrightarrow \tG^1
\hookrightarrow \tG^2.
\)
Let us identify each group above with its images under these maps,
so that they are all contained within $\tG = \GL_8$.

The maximal split subtorus $A_{\tT}$ of $\tT$
is isomorphic to $\GL_1 \times \GL_1 \times \GL_1$.
Note that $\tM^0 = Z_{\tG^0}(A_{\tT})$.
For $i=1,2$, let
$\tM^i = Z_{\tG^i}(A_{\tT})$,
and write $\tM = \tM^2$.

Let $G = \SL_8$.
For $X \in \{G^i, M^i, M, T \, | \, i = 0, 1, 2\}$,
we let 
$X = \widetilde X \cap G$.
We thus obtain a twisted Levi sequence
$(G^0 \subset G^1 \subset G^2 = \SL_8)$,
and a Levi subgroup $M^0 \subset G^0$.
We denote by $\Phi(X, T)$ the absolute root system of $X$ with respect to the maximal torus $T$.
For $\alpha \in \Phi(X, T)$, we denote by $\alpha^{\vee}$ the corresponding (absolute) coroot.

We let $\widetilde{K}^{0}$ be the Iwahori subgroup of $\tG^0(F)$ given by $\widetilde{K}^{0} = I_{2} \times I_{4}$, where $I_{2}$ denotes the Iwahori subgroup of $\GL_{2}(E_{2}) = \left(
\Res_{E_{2}/F}(\GL_{2})
\right)(F)$ defined by
\[
I_{2} =
\biggl\{
\begin{pmatrix}
a & b \\
c & d
\end{pmatrix} \in \GL_{2}(E_{2})
\biggr. \,\biggl|\,
a, d \in \cO_{E_{2}}^{\times}, b \in \cO_{E_{2}}, c \in \fp_{E_{2}}
\biggr\},
\]
and $I_{4}$ denotes the Iwahori subgroup of $\left(
\Res_{E_{4}/F}(\GL_{1})
\right)(F) = E_4^{\times}$, i.e., $I_{4} = \cO_{E_{4}}^{\times}$. 

We choose $x_{0} \in \cB(M^{0}, F)$ and fix a commutative diagram $\{ \iota \}$
\[
\begin{tikzcd}
	\cB(G^0, F)
	\arrow[r, hook]
	\arrow[dr, pos=.5, phantom, "\circlearrowleft"]
	&
	\cB(G^1, F)
	\arrow[r, hook]
	\arrow[dr, pos=.5, phantom, "\circlearrowleft"]
	&
	\cB(G^2, F)
	\\
	\cB(M^0, F)
	\arrow[u, hook]
	\arrow[r, hook]
	&
	\cB(M^1, F)
	\arrow[u, hook]
	\arrow[r, hook]
	&
	\cB(M^2, F)
	\arrow[u, hook]
\end{tikzcd} 
\]
of admissible embeddings of buildings that is $(0, \tfrac18, \tfrac14)$-generic relative to $x_{0}$ in the sense of \cite[3.5~Definition]{Kim-Yu} such that $G^0(F)_{x_0}=\widetilde K^0 \cap G^0(F)$.
Here and from now on, we identify a point in $\cB(M^{0}, F)$ with its images via the embeddings $\{ \iota \}$. 
Then we have
\[
G^0(F)_{x_{0}, 0} = \left\{
(g_2, g_4) \in (I_{2} \times I_{4}) \cap G^0(F) \mid (\det(g_2) \bmod \mathfrak{p}_{E_{2}}) \cdot (g_{4} \bmod \mathfrak{p}_{E_{4}})^{2} = 1
\right\},
\]
where we regard $(\det(g_2) \bmod \mathfrak{p}_{E_{2}})$ and $(g_{4} \bmod \mathfrak{p}_{E_{4}})$ as elements of $\mathfrak{f}^{\times}$.
Let $K^0$ be either $G^{0}(F)_{x_0}$ or $G^0(F)_{x_0, 0}$.
We also define $\widetilde{K}_{M^0} = \widetilde{K}^0 \cap \widetilde{M}^0(F)$ and $K_{M^0} = K^{0} \cap M^0(F)$.
Thus, we have $\widetilde{K}_{M^0} = \cO_{E_{2}}^{\times} \times \cO_{E_{2}}^{\times} \times \cO_{E_{4}}^{\times}$ and
$K_{M^0} = M^0(F)_{x_0}$ or $M^0(F)_{x_0,0}$
according as $K^0 = G^0(F)_{x_0}$ or $G^0(F)_{x_0,0}$.
More precisely, 
\begin{equation}
\label{descriptionofKM0}
K_{M^0} = 
\begin{cases}
\left\{
(x, y, z) \in \cO_{E_{2}}^{\times} \times \cO_{E_{2}}^{\times} \times \cO_{E_{4}}^{\times}
\mid
N_{E_{2}/ F}(xy) N_{E_{4}/ F}(z) = 1 
\right\} \\
\hspace{20em}\text{if $K^0 = G^0(F)_{x_0}$,}
\\[1ex]
\left\{
(x, y, z) \in \cO_{E_{2}}^{\times} \times \cO_{E_{2}}^{\times} \times \cO_{E_{4}}^{\times}
\;\middle|\;
\begin{aligned}
& N_{E_{2}/F}(xy)\, N_{E_{4}/F}(z) = 1, \\
& (xy \bmod \mathfrak{p}_{E_{2}})\cdot (z \bmod \mathfrak{p}_{E_4})^{2} = 1
\end{aligned}
\right\}
\\
\hspace{20em}\text{if $K^0 = G^0(F)_{x_0,0}$.}
\end{cases}
\end{equation}
We observe that
\begin{equation}
\label{connectedparahoricvsdisconenctedparahoric}
K^{0} = K_{M^0} \cdot G^0(F)_{x_0, 0}.
\end{equation}
Since the embedding $\iota \colon \cB(M^0, F) \hookrightarrow \cB(G^0, F)$ is $0$-generic relative to $x_{0}$, the inclusion $M^0(F)_{x_0,0} \subset G^0(F)_{x_0,0}$ induces an isomorphism 
\[
M^0(F)_{x_0, 0}/M^0(F)_{x_{0}, 0+} \isoarrow G^0(F)_{x_0, 0}/G^0(F)_{x_0, 0+}.
\]
Combining this with \eqref{connectedparahoricvsdisconenctedparahoric}, we also have
\begin{equation}
\label{isomofreductivequotients}
K_{M^0}/M^0(F)_{x_{0}, 0+} \isoarrow K^0/G^0(F)_{x_0, 0+}.
\end{equation}

We define the character $\widetilde\rho_{M^0}$ of $\widetilde{K}_{M^0}$ by $\widetilde\rho_{M^0} = 1 \boxtimes \left(
\eta \circ N_{E_{2} / F}
\right) \boxtimes 1$, and write $\rho_{M^0} = \widetilde\rho_{M^0} \restriction_{K_{M^0}}$.
We define the character $\rho^0$ of $K^0$ as the composition of the surjection $K^0 \twoheadrightarrow K^0/G^0(F)_{x_{0}, 0+}$, the inverse of the isomorphism in \eqref{isomofreductivequotients} and the character $\rho_{M^0}$.
More precisely, $\rho^0$ is the restriction to $K^0$ of the character $\eta_{2} \boxtimes 1$ of the group $\widetilde K^0$, where $\eta_{2}$ denotes the character of $I_{2}$ defined by
\[
\eta_{2} \left(
\begin{pmatrix}
a & b \\
c & d
\end{pmatrix}
\right) = \left(
\eta \circ N_{E_{2} / F}
\right)(d).
\]

Let $E = E_{2}$ or $E_{4}$.
We fix an additive character $\Psi \colon F \rightarrow \bC^{\times}$ that is trivial on $\fp_{F}$ and non-trivial on $\cO_{F}$.
We define a character $\phi_{E}$ of $1 + \fp_{E}$ by $\phi_{E}(1 + x) = \Psi(\Tr_{E/F}(\varpi_{E}^{-1} x))$ for $x \in \fp_{E}$.
We fix an extension of $\phi_{E}$ to $E^{\times}$ and use the same notation $\phi_{E}$ for it.
We also define the character $\phi_{\GL_2(E_2)}$ of $\GL_{2}(E_{2})$ by $\phi_{\GL_2(E_2)}(g) = \phi_{E_{2}}(\det(g))$.
We define the character $\widetilde{\phi}_{0}$ of $\tG^0(F) = \GL_{2}(E_{2}) \times E_{4}^{\times}$ by $\widetilde{\phi}_{0} = 1 \boxtimes \phi_{E_{4}}$, and define the character $\widetilde{\phi}_{1}$ of $\tG^1(F) = \GL_2(E_2) \times \GL_4(F)$ by $\widetilde{\phi}_{1} = \phi_{\GL_2(E_2)} \boxtimes 1$.
We write $\phi_{0} = \widetilde{\phi}_{0} \restriction_{G^0(F)}$ and $\phi_{1} = \widetilde{\phi}_{1} \restriction_{G^1(F)}$.
\begin{lemma}
The character $\phi_{0}$ is $(G^{1}, G^{0})$-generic of depth $\tfrac14$ relative to the point $x_{0}$, and the character $\phi_{1}$ is $(G^{2}, G^{1})$-generic of depth $\tfrac12$ relative to the point $x_{0}$ in the sense of \cite[Definition~3.5.2]{Fintzen-IHES}.
\end{lemma}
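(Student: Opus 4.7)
The plan is to verify the three conditions of Definition~3.5.2 of \cite{Fintzen-IHES} directly for each character. For both $i=0,1$, one must check: (a) that $\phi_i$ has the claimed depth $r_i$; (b) that its restriction to the depth-$r_i$ Moy--Prasad subgroup of $G^i(F)$ at $x_0$ corresponds, via the Moy--Prasad isomorphism, to a character of the form $\exp(X)\mapsto \Psi(\langle X_i^*, X\rangle)$ for some $X_i^* \in \Lie^*(G^i)^{G^i}$ of depth $-r_i$ relative to $x_0$; and (c) that for every absolute root $\alpha \in \Phi(G^{i+1}, T) \setminus \Phi(G^i, T)$, the pairing $\langle X_i^*, H_\alpha\rangle$ has $\ord$-valuation exactly $-r_i$, where $H_\alpha$ is the coroot. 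I would first set up the Galois-descent dictionary for $T$: after base change, the cocharacter lattice of $T$ decomposes as $\bigoplus_{k=1}^{8}\bZ e_k$, with coordinates $\{1,2\}$ and $\{3,4\}$ parametrizing the two $\sigma_{2}$-conjugate embeddings of the diagonal $\GL_1\times\GL_1 \subset \GL_2$ from $\Res_{E_2/F}(\GL_2)$, and coordinates $\{5,6,7,8\}$ parametrizing the four $\sigma_{4}$-conjugate embeddings of $\Res_{E_4/F}(\GL_1)$.

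For $\phi_0$: using that the different of $E_4/F$ has valuation $3$ (the derivative of $X^4+\zeta\varpi_F$ at $\varpi_{E_4}$ is $4\varpi_{E_4}^3$, and $p$ is odd), together with $(\Res_{E_4/F}\GL_1)(F)_r = 1+\fp_{E_4}^{\lceil 4r\rceil}$, one verifies that $\phi_0$ has depth $1/4$ with dual element $X_0^*$ equal to $x\mapsto\Tr_{E_4/F}(\varpi_{E_4}^{-1}x)$ on $\Lie(\Res_{E_4/F}\GL_1)$, extended by $0$ on the other factor. Because $\Res_{E_4/F}(\GL_1)$ is central in $\tG^0$, this $X_0^*$ is coadjoint-invariant. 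The roots of $G^1$ not in $G^0$ are precisely $\{e_i-e_j : i\ne j,\ i,j\in\{5,6,7,8\}\}$. After base change, $X_0^*$ assigns the diagonal value $\sigma_4^k(\varpi_{E_4}^{-1})$ at position $5+k$; the hypothesis $4\mid q-1$ yields a primitive fourth root of unity $\mu\in\cO_F^\times$, and $\varpi_{E_4}^4 = -\zeta\varpi_F$ forces $\sigma_4^k(\varpi_{E_4}) = \mu^k\varpi_{E_4}$, so
\[
\langle X_0^*, H_{e_{5+k}-e_{5+l}}\rangle = (\mu^{-k}-\mu^{-l})\varpi_{E_4}^{-1},
\]
which has $\ord$-valuation $-1/4$ because $\mu^{-k}-\mu^{-l}\in\cO_F^\times$ for $k\neq l$ when $p$ is odd (distinct roots of unity of order prime to $p$ differ by a unit).

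For $\phi_1$: the differential of $g\mapsto\phi_{E_2}(\det g)$ yields the functional $X_1^*(X) = \Tr_{E_2/F}(\varpi_{E_2}^{-1}\tr X)$ on $\Lie(\Res_{E_2/F}\GL_2)$, extended by $0$ on $\Lie(\GL_4)$; it is coadjoint-invariant because $\tr$ is. Using that the different of $E_2/F$ has valuation $1$ (the derivative of $X^2+\varpi_F$ at $\varpi_{E_2}$ is $2\varpi_{E_2}$, a uniformizer) and $(\Res_{E_2/F}\GL_1)(F)_r = 1+\fp_{E_2}^{\lceil 2r\rceil}$, the depth of $\phi_1$ is $1/2$. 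After base change, $X_1^*$ assigns diagonal value $\sigma_2^0(\varpi_{E_2}^{-1}) = \varpi_{E_2}^{-1}$ to positions $1,2$, value $\sigma_2^1(\varpi_{E_2}^{-1}) = -\varpi_{E_2}^{-1}$ to positions $3,4$, and $0$ to positions $5$--$8$. The roots of $G^2 = \SL_8$ not in $G^1$ are the $e_i-e_j$ with $\{i,j\}$ not contained in any single block $\{1,2\}$, $\{3,4\}$, or $\{5,6,7,8\}$; pairing $X_1^*$ with their coroots yields either $\pm\varpi_{E_2}^{-1}$ (when one index is in $\{1,2,3,4\}$ and the other in $\{5,6,7,8\}$) or $\varpi_{E_2}^{-1}-(-\varpi_{E_2}^{-1}) = 2\varpi_{E_2}^{-1}$ (when one index is in $\{1,2\}$ and the other in $\{3,4\}$), both of which have $\ord$-valuation $-1/2$ since $p$ is odd.

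The main obstacle is bookkeeping the Galois descent: identifying precisely how the absolute roots of $G^{i+1}$ sit in the base-changed torus $T$, and correctly tracking the Galois twists and signs when pairing $X_i^*$ with the coroots. Once that dictionary is set up, each genericity check reduces to the elementary fact that distinct Galois conjugates of $\varpi_{E_k}$ differ by a unit in $\cO_F$.
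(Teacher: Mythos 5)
Your overall strategy matches the paper's: construct $X_0^*$ and $X_1^*$ exactly as the restrictions of $\Tr_{E_4/F}\circ m(\varpi_{E_4}^{-1})$ (precomposed with projection) and $\Tr_{E_2/F}\circ m(\varpi_{E_2}^{-1})\circ\Lie(\Res_{E_2/F}(\det))$, and then check the genericity conditions by pairing with coroots. Your computation of the coroot pairings agrees with the paper's (you obtain $(\mu^{-k}-\mu^{-l})\varpi_{E_4}^{-1}$ versus the paper's $\sigma_4^i(\varpi_{E_4}^{-1})-\sigma_4^j(\varpi_{E_4}^{-1})$, and $\{\pm\varpi_{E_2}^{-1},\pm 2\varpi_{E_2}^{-1}\}$ in both), and your route through an explicit primitive fourth root $\mu\in\cO_F^\times$ is a perfectly good elementary substitute for the paper's appeal to a general valuation lemma for Galois conjugates.

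However, there is a genuine gap: you only verify the depth condition {\bf (GE0)} and the coroot pairing condition {\bf (GE1)} of \cite[Definition~3.5.2]{Fintzen-IHES}, but you never address {\bf (GE2)}. The conditions (a), (b), (c) you list at the start are not the three conditions of that definition; they amount to (GE0) and (GE1) together with the requirement that $\phi_i$ be realized by $X_i^*$. The definition also requires (GE2), which is a separate condition and does not follow formally from (GE1). The paper closes this by observing that the only possible torsion prime for the dual root data of $G^1$ and $G^2$ is $2$ and that $p\neq 2$, so \cite[Lemma~8.1]{Yu} applies to deduce (GE2) from (GE1). Without some version of this step your argument is incomplete. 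A smaller point: your (GE0) verification for $\phi_0$ goes through the different of $E_4/F$ and the depth of $\phi_{E_4}$ on $E_4^\times$, but since $G^0$ is cut out of $\tG^0$ by the $\SL_8$ condition, you still need to exhibit a trace-zero element of $\Lie(G^0)(F)_{x_0,1/4}$ on which $X_0^*$ has valuation $0$ (the paper uses $(0,\varpi_{E_4})$, which is trace-zero because $X^4+\zeta\varpi_F$ has no cubic term); your proposal glosses over this restriction step.
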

\begin{proof}
By construction, $\phi_0$ is trivial on $G^0(F)_{x_0,(1/4)+}$, and  $\phi_1$ is trivial on $G^1(F)_{x_0,(1/2)+}$.
We define $\widetilde X_{0}^{*} \in \Lie^{*}(\tG^{0})^{\tG^0}(F)$ and $\widetilde X_{1}^{*} \in \Lie^{*}(\tG^{1})^{\tG^1}(F)$ as follows.
Let $E \in \{E_2, E_4\}$.
We use the same notation 
\[
\Tr_{E/F} \colon \Res_{E/F}(\Lie(\GL_1)) \to \Lie(\GL_1)
\]
for the usual trace morphism whose map on $F$-valued points is the trace map.
Let
\[
m(\varpi_{E}^{-1}) \colon \Lie\left(
\Res_{E/F}(\GL_{1})
\right) \to \Lie\left(
\Res_{E/F}(\GL_{1})
\right)
\]
denote the morphism induced by multiplication by $\varpi_{E}^{-1} \in E = \Lie\left(
\Res_{E/F}(\GL_{1})
\right)(F)$.
We define $\widetilde X_{0}^{*} \in \Lie^{*}(\tG^{0})^{\tG^0}(F)$ as the composition of the projection map
\[
\Lie(\tG^{0}) \rightarrow \Lie\left(
\Res_{E_{4}/F}(\GL_{1})
\right)
\]
and
\[
\Tr_{E_{4}/F}
 \circ m(\varpi_{E_{4}}^{-1}) \colon \Lie\left(
\Res_{E_{4}/F}(\GL_{1})
\right) \to \Lie(\GL_{1}).
\]
To define $\widetilde X_{1}^{*}$, we let 
\[
\Res_{E_{2}/F}(\det) \colon \Res_{E_{2}/F}(\GL_{2}) \to \Res_{E_2/F}(\GL_{1})
\]
be the morphism of algebraic groups induced by the usual determinant map $\det \colon \GL_{2} \to \GL_{1}$.
Now, we define $\widetilde X_{1}^{*} \in \Lie^{*}(\tG^{1})^{\tG^1}(F)$ as the composition of the projection map
\[
\Lie(\tG^{1}) \rightarrow \Lie\left(
\Res_{E_{2}/F}(\GL_{2})
\right)
\]
and
\[
\Tr_{E_{2}/F}
 \circ m(\varpi_{E_{2}}^{-1})  \circ \Lie\left(
\Res_{E_{2}/F}(\det)
\right) \colon \Lie\left(
\Res_{E_{2}/F}(\GL_{2})
\right) \to \Lie(\GL_{1}).
\]
We define $X_{0}^{*} \in \Lie^{*}(G^{0})^{G^0}(F)$ and $X_{1}^{*} \in \Lie^{*}(G^{1})^{G^1}(F)$ as the restrictions of $\widetilde X_{0}^{*}$ and $\widetilde X_{1}^{*}$ to $\Lie(G^0)$ and $\Lie(G^1)$, respectively. 
Then the restriction of $\phi_{0}$ to 
\[
G^{0}(F)_{x_{0}, 1/4}/G^0(F)_{x_0,(1/4)+} \simeq \Lie(G^0)(F)_{x_{0}, 1/4}/\Lie(G^0)(F)_{x_{0}, (1/4)+}
\]
 is given by $\Psi \circ X_{0}^{*}$, and the restriction of $\phi_{1}$ to 
\[
G^{1}(F)_{x_{0}, 1/2}/G^1(F)_{x_0,(1/2)+} \simeq \Lie(G^1)(F)_{x_{0}, 1/2}/\Lie(G^1)(F)_{x_{0}, (1/2)+}
\]
 is given by $\Psi \circ X_{1}^{*}$.

We will prove that $X_{0}^{*}$ is $(G^1, G^0)$-generic of depth $- 1/4$, and $X_{1}^{*}$ is $(G^2, G^1)$-generic of depth $-1/2$ in the sense of \cite[Definition~3.5.2]{Fintzen-IHES}.
First, we will prove that $X_{0}^{*}$ satisfies {\bf (GE0)} and {\bf (GE1)} in \cite[Definition~3.5.2]{Fintzen-IHES}.
Let $\alpha \in \Phi(G^1, T) \smallsetminus \Phi(G^{0}, T)$.
Then we have
\[
X^{*}_{0}(\Lie( \alpha^{\vee})(1)) = \sigma^{i}_{4}(\varpi_{E_{4}}^{-1}) - \sigma^{j}_{4}(\varpi_{E_{4}}^{-1})
\]
for some $i, j \in \{0, 1, 2, 3\}$ with $i \neq j$.
Since $E_{4} = F[\varpi_{E_{4}}^{-1}]$, we obtain from \cite[Proposition~5.9]{2020arXiv200106259M} that
\[
\ord\left(
\sigma^{i}_{4}(\varpi_{E_{4}}^{-1}) - \sigma^{j}_{4}(\varpi_{E_{4}}^{-1})
\right) = \ord(\varpi_{E_{4}}^{-1}) = -1/4.
\]
Thus, the element $X_{0}^{*}$ satisfies {\bf (GE1)} in \cite[Definition~3.5.2]{Fintzen-IHES}.
Moreover, since $(0, \varpi_{E_{4}}) \in \Lie(G^0)_{x_{0}, 1/4}$ (where we view $\varpi_{E_{4}}$ in $\Lie(\Res_{E_4/F}(\GL_1))(F)$ by identifying the latter with $E_4$ and note that $(0, \varpi_{E_{4}}) \in \Lie(G^0)(F)$ as $\varpi_{E_4}$ has trace zero) and 
\[
\ord\left(
X^{*}_{0}(0, \varpi_{E_{4}})
\right) = \ord(4) = 0,
\]
we have $X^{*}_{0} \not \in \Lie^*(G^0)_{x_{0}, (-1/4)+}$.
Since it can be checked from the definition that $X_{0}^{*} \in \Lie^*(G^0)_{x_0, -1/4}$, the element $X_{0}^{*}$ also satisfies {\bf (GE0)} in \cite[Definition~3.5.2]{Fintzen-IHES}.

Next, we will prove that $X_{1}^{*}$ satisfies {\bf (GE0)} and {\bf (GE1)} in \cite[Definition~3.5.2]{Fintzen-IHES}.
Let $\alpha \in \Phi(G^2, T) \smallsetminus \Phi(G^{1}, T)$.
Then, using that $\sigma_2(\varpi_{E_2})=-\varpi_{E_2}$, we obtain
\[
X^{*}_{1}(\Lie(\alpha^{\vee})(1)) \in \{\pm\varpi_{E_{2}}^{-1}, \pm 2\varpi_{E_{2}}^{-1}\} .
\]
Hence
\[
\ord\left(
X^{*}_{1}(\Lie(\alpha^{\vee})(1))
\right) = \ord(\varpi_{E_{2}}^{-1}) = -1/2 ,
\]
and $X_{1}^{*}$ satisfies {\bf (GE1)} in \cite[Definition~3.5.2]{Fintzen-IHES}.
Consider the element 
$\left(\begin{pmatrix}
	\varpi_{E_2} & 0 \\
	0 & 0 
\end{pmatrix}, 0\right) \in \Lie^*(G^1)_{x_0, -1/2}$, then
\[\ord\left( X_1^* \left(\begin{pmatrix}
	\varpi_{E_2} & 0 \\
	0 & 0 
\end{pmatrix}, 0\right) \right)=\ord(2)=0 ,
\]
thus  $X_1^*  \notin \Lie^*(G^1)_{x_0, -1/2+}$.
Moreover, $X_1^*  \in \Lie^*(G^1)_{x_0, -1/2}$, hence $X_{1}^{*}$ satisfies {\bf (GE0)} in \cite[Definition~3.5.2]{Fintzen-IHES}.

Since the only possible torsion prime for the dual root datum of $G^1$ and $G^2$ is 2, and since $p \neq 2$, by \cite[Lemma~8.1]{Yu} condition {\bf (GE2)} is also satisfied.
\end{proof}

As a consequence of the above lemma, the datum
\[
\Sigma = \bigl(
(G^0 \subset G^1 \subset G^2, M^0), (\tfrac14, \tfrac12, \tfrac12), (x_{0}, \{\iota\}), (K_{M^0}, \rho_{M^0}), (\phi_{0}, \phi_{1}, 1)
\bigr)
\]
satisfies the properties of \cite[(7.2)]{Kim-Yu}, in other words, it is a $G$-datum as in \cite[Definition~4.1.1]{HAIKY}.
Applying the construction of Kim and Yu in \cite[7.4]{Kim-Yu} to $\Sigma$, we obtain a compact, open subgroup $K$ of $G(F)$ and an irreducible smooth representation $\rho$ of $K$.

\begin{remark}
In \cite{HAIKY}, we twist the construction of Kim and Yu by a quadratic character $\epsilon_{x_{0}}^{\overrightarrow{G}}$ of $K^{0}/G^0(F)_{x_{0}, 0+} \simeq K_{M^0}/M^0(F)_{x_{0}, 0+}$ introduced in \cite{FKS}, see \cite[\S4.1]{HAIKY} for details.
In our case, we can compute $\epsilon_{x_{0}}^{\overrightarrow{G}}$ using \cite[Definition~3.1, Theorem~3.4]{FKS} as follows:
\[
\epsilon_{x_{0}}^{\overrightarrow{G}}\left(
(x, y, z) \bmod M^0(F)_{x_{0}, 0+}
\right) = \sgn_{\ff}(xy^{-1} \bmod \mathfrak{p}_{E_{2}}) = \sgn_{\ff}(xy \bmod \mathfrak{p}_{E_{2}})
\]
for $(x, y, z) \in K_{M^0}$, where $\sgn_{\ff}$ denotes the unique non-trivial quadratic character of $\ff^{\times} = \mathcal{O}^{\times}_{E_{2}}/(1+\mathfrak{p}_{E_{2}})$.
Since $-1 \in (\ff^{\times})^{2}$, the conditions in \eqref{descriptionofKM0} imply that 
\[
xy \bmod \mathfrak{p}_{E_{2}} = \pm (z \bmod \mathfrak{p}_{E_{4}})^{-2} \in (\ff^\times)^2.
\]
Thus, we obtain that $\epsilon_{x_{0}}^{\overrightarrow{G}}$ is trivial, and the twisted and non-twisted constructions agree in our case.
\end{remark}

According to
\cite[Theorem \ref{HAIKY-theoremstructureofheckeforKimYu}]{HAIKY},
we have an isomorphism of $\bC$-algebras
\[
\cH(G^0(F), (G^0(F)_{x_0}, \rho^0)) \isoarrow \cH(G(F), (K, \rho)).
\]
In the following section, we determine explicitly the structure of the Hecke algebras $\cH(G^0(F),(G^0(F)_{x_0}, \rho^0))$ and $\cH(G^0(F),(G^0(F)_{x_0,0}, \rho^0))$.

\section{Structure of the depth-zero Hecke algebra}
\label{sec:depth0}

In this section, we will study the Hecke algebra $\cH(G^0(F), (K^0, \rho^0))$ associated with the depth-zero type $(K^0, \rho^0)$.
We define the subgroup $N(\rho_{M^0})$
of the $F$-points of the normalizer $N_{G^0}(M^0)$ of $M^0$ in $G^0$ by
\[
N(\rho_{M^0}) :=
\left\{
n \in N_{G^0}(M^0)(F) \mid {^n\!K_{M^0}} = K_{M^0}, \, {^n\!\rho_{M^0}} = \rho_{M^0}
\right\}
\]
and write $W(\rho_{M^0}) := N(\rho_{M^0})/K_{M^0}$. 
We write 
\[
I_{G^0(F)}(\rho^0)
:=
\{
g \in G^0(F)
\mid
\Hom_{K^0 \cap ^g\! K^0}({^g\! \rho^0}, \rho^0) \neq \{0\}
\}.
\]
Then from 
\cite[Proposition~\ref{HAI-propproofofaxiombijectionofdoublecoset}
and
Corollary~\ref{HAI-corollarybijectiondoublecosetandweylgroup}]{HAI},
we have $N(\rho_{M^0}) \subset I_{G^0(F)}(\rho^0)$ and the inclusion induces a bijection
\[
W(\rho_{M^0}) \simeq K^0 \backslash I_{G^0(F)}(\rho^0) / K^0.
\]
In order to describe the groups $N(\rho_{M^0})$ and $W(\rho_{M^0})$ below, we define the element $\widetilde{s}$ of the group
\[
G^0(F) = \left(
\GL_{2}(E_{2}) \times \GL_{1}(E_{4})
\right) \cap \SL_{8}(F) \supset \SL_{2}(E_{2}) \times \SL_{1}(E_{4})
\]
by
$
\widetilde{s} = \left(
(\begin{smallmatrix}
\phm 0 & 1 \\
-1 & 0
\end{smallmatrix}), 1
\right) 
$.
Then we have $N_{G^0}(M^0)(F) = \{1, \widetilde{s}\} \ltimes M^0(F)$.
We note that the element $\widetilde{s}$ normalizes the groups $\widetilde{K}_{M^0}$ and $K_{M^0}$.
\begin{lemma}
\label{lemmasnormalizesthecharacterrhoM}
The element $\widetilde{s}$ normalizes the character $\rho_{M^0}$.
\end{lemma}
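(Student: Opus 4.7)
The plan is to reduce the statement to a short residue-field computation. First I would compute the conjugation action of $\widetilde s$ on $K_{M^0}$. A direct $\GL_2$-calculation gives $\widetilde s^{-1}(x,y,z)\widetilde s = (y,x,z)$ for $(x,y,z) \in K_{M^0}$, so the only effect is a swap in the $\GL_2(E_2)$-factor. Consequently $\,^{\widetilde s}\!\rho_{M^0}(x,y,z) = \eta(N_{E_2/F}(x))$, and the claim becomes
$\eta\bigl(N_{E_2/F}(xy^{-1})\bigr) = 1$
for every $(x,y,z) \in K_{M^0}$.

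Next I would unpack $\eta$ and the two norm maps. Since $E_2/F$ and $E_4/F$ are both totally ramified, the Galois actions are trivial on residue fields, so $N_{E_2/F}(u) \equiv \bar u^{2} \pmod{\fp_F}$ for $u \in \cO_{E_2}^{\times}$ and $N_{E_4/F}(v) \equiv \bar v^{4} \pmod{\fp_F}$ for $v \in \cO_{E_4}^{\times}$. Because $\eta$ is trivial on $\varpi_F$ and on $1+\fp_F$ and has order $4$ (as $\eta(\zeta)=\sqrt{-1}$), its restriction to $\cO_F^{\times}$ factors through $\ff^{\times}$ as a character with kernel $(\ff^{\times})^{4}$. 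The problem therefore reduces to showing $\overline{xy^{-1}} \in (\ff^{\times})^{2}$.

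To close the argument I would invoke the determinantal condition $N_{E_2/F}(xy)\, N_{E_4/F}(z) = 1$ defining $K_{M^0}$. Reducing modulo $\fp_F$ yields $\overline{xy}^{\,2}\,\bar z^{4} = 1$, hence $\overline{xy} = \pm \bar z^{-2}$. The hypothesis $4 \mid q - 1$ guarantees that $-1$ is a square in $\ff^{\times}$, so in either case $\overline{xy}$ is a square; multiplying by the square $\bar y^{-2}$ gives $\overline{xy^{-1}} = \overline{xy}\cdot\bar y^{-2} \in (\ff^{\times})^{2}$, as required. This handles both choices of $K^{0}$ simultaneously, since for $K^{0} = G^0(F)_{x_0,0}$ the group $K_{M^0}$ is only smaller.

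The only genuine obstacle is conceptual bookkeeping: the analogous statement for the ambient character $\widetilde\rho_{M^0}$ on $\widetilde K_{M^0}$ is \emph{false}, so the passage to $\SL_8$ (which forces the determinantal constraint) must be used essentially, and the hypothesis $4 \mid q-1$ must be invoked at precisely the moment when the sign ambiguity $\pm \bar z^{-2}$ needs to be absorbed into a square.
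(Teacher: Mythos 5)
Your proof is correct. The core mathematics is the same as in the paper — both arguments hinge on the $\SL_8$-determinant constraint $N_{E_2/F}(xy)\,N_{E_4/F}(z)=1$ together with the structure of $\eta$ and of the norm images $N_{E_i/F}(\cO_{E_i}^\times)$ — but the presentation is genuinely different. The paper works at the level of characters: it rewrites $^{\widetilde s}\widetilde\rho_{M^0}$ as $\bigl(1 \boxtimes (\eta^{-1}\circ N_{E_2/F}) \boxtimes (\eta^{-1}\circ N_{E_4/F})\bigr)\otimes(\eta\circ\det)$, observes that $\eta\circ\det$ is trivial on $K_{M^0}\subset\SL_8(F)$, and then uses $N_{E_2/F}(\cO_{E_2}^\times)=(1+\fp_F)\langle\zeta^2\rangle$ and $N_{E_4/F}(\cO_{E_4}^\times)=(1+\fp_F)\langle\zeta^4\rangle$ to replace $\eta^{-1}$ by $\eta$ and $\eta^{-1}$ by $1$, respectively. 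You instead reduce the whole claim to the single condition $\eta\bigl(N_{E_2/F}(xy^{-1})\bigr)=1$, pass to the residue field using $N_{E_2/F}(u)\equiv\bar u^2$ and $N_{E_4/F}(v)\equiv\bar v^4$, and check that $\overline{xy^{-1}}$ is a square by direct manipulation of the constraint $\overline{xy}^{\,2}\bar z^4=1$. The payoff of your version is that it is more transparent about exactly where the hypotheses enter: the passage to $\SL_8$ supplies the constraint, and $4\mid q-1$ absorbs the sign ambiguity $\pm\bar z^{-2}$ into a square — and you correctly observe that the claim genuinely fails for $\widetilde\rho_{M^0}$ on $\widetilde K_{M^0}$, which the paper leaves implicit. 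The paper's factorization via $\eta\circ\det$ is slicker and avoids case analysis entirely, but both are complete and equally rigorous.
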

\begin{proof}
We have
\begin{align*}
^{\widetilde{s}}\!\widetilde\rho_{M^0} &= {^{\widetilde{s}}\!\Bigl(
1 \boxtimes (
\eta \circ N_{E_{2} / F}
) \boxtimes 1
\Bigr)} \\
&= 
(\eta \circ N_{E_{2} / F}) \boxtimes 1 \boxtimes 1 \\
&= \Bigl(
1 \boxtimes (\eta^{-1} \circ N_{E_{2} / F}) \boxtimes (\eta^{-1} \circ N_{E_{4} / F})
\Bigr) \otimes 
(\eta \circ \det),
\end{align*}
where $\det$ denotes the restriction of the determinant map $\GL_{8}(F) \to F^{\times}$ to the group $\widetilde{K}_{M^0}$.
Since the group $K_{M^0}$ is contained in the group $\SL_{8}(F)$, we have
\begin{equation}
\label{calculationofsrhoM}
^{\widetilde{s}}\!\rho_{M^0} = \Bigl(
1 \boxtimes (\eta^{-1} \circ N_{E_{2} / F}) \boxtimes (\eta^{-1} \circ N_{E_{4} / F})
\Bigr)\restriction_{K_{M^0}}.
\end{equation}
We will prove that
\begin{equation}
\label{etasquareE1trivial}
\eta^{2} \circ N_{E_{2} / F} \restriction_{\cO_{E_{2}}^{\times}} = 1
\end{equation}
and
\begin{equation}
\label{etaE2trivial}
\eta \circ N_{E_{4}/F} \restriction_{\cO_{E_{4}}^{\times}}  = 1.
\end{equation}
First, we will prove Equation~\eqref{etasquareE1trivial}.
The definition of $E_{2}$ implies that we have 
\[
N_{E_{2} / F}(\cO_{E_{2}}^{\times}) = (1 + \fp_{F}) \langle \zeta^{2} \rangle.
\]
Hence, Equation~\eqref{etasquareE1trivial} follows from the definition of $\eta$.
Similarly, we can prove Equation~\eqref{etaE2trivial} by using the definition of $\eta$ and the equation
\[
N_{E_{4} / F}(\cO_{E_{4}}^{\times}) = (1 + \fp_{F}) \langle \zeta^{4} \rangle.
\]

Combining equation~\eqref{calculationofsrhoM} with Equations~\eqref{etasquareE1trivial} and \eqref{etaE2trivial}, we obtain that
\begin{align*}
^{\widetilde{s}}\!\rho_{M^0} &= \Bigl(
1 \boxtimes (\eta^{-1} \circ N_{E_{2} / F}) \boxtimes (\eta^{-1} \circ N_{E_{4} / F})
\Bigr)\restriction_{K_{M^0}} \\
&= 
 \Bigl(
1 \boxtimes (\eta \circ N_{E_{2} / F}) \boxtimes 1
\Bigr)\restriction_{K_{M^0}} \\
&= \widetilde\rho_{M^0}\restriction_{K_{M^0}} = \rho_{M^0}.
\qedhere
\end{align*}
\end{proof}
\begin{proposition}
\label{propositionstructureofNrhoMSL8example}
We have
\[
N(\rho_{M^0}) = N_{G^0}(M^0)(F) = \{1, \widetilde{s}\} \ltimes M^0(F).
\]
\end{proposition}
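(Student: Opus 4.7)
The claim is essentially the combination of two observations: first, that $N_{G^0}(M^0)(F) = \{1, \widetilde{s}\} \ltimes M^0(F)$, which has already been asserted in the text just before the proposition; and second, that every element of this group actually lies in $N(\rho_{M^0})$. So my plan is simply to verify the two defining conditions of $N(\rho_{M^0})$ on the two sets of generators, namely on $M^0(F)$ and on $\widetilde{s}$.

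For elements $m \in M^0(F)$, the key point is that $M^0$ is a torus: indeed, $\widetilde{M}^0 = \Res_{E_2/F}(\GL_1 \times \GL_1) \times \Res_{E_4/F}(\GL_1)$ is a torus, and $M^0 = \widetilde{M}^0 \cap G$ is a subtorus. Consequently $M^0(F)$ is abelian, so conjugation by $m$ fixes the subgroup $K_{M^0} \subset M^0(F)$ pointwise, and hence fixes any character of $K_{M^0}$, in particular $\rho_{M^0}$. Thus $M^0(F) \subset N(\rho_{M^0})$.

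For the element $\widetilde{s}$, both required properties have essentially been recorded already: the text notes directly that $\widetilde{s}$ normalizes $K_{M^0}$ (this can be checked by a direct matrix computation using the explicit descriptions of $\widetilde{K}_{M^0}$ and $K_{M^0}$, noting that conjugation by $\bigl(\begin{smallmatrix} 0 & 1 \\ -1 & 0 \end{smallmatrix}\bigr)$ swaps the two diagonal $\GL_1$-factors of $\GL_2$, and $K_{M^0}$ is stable under this swap in view of \eqref{descriptionofKM0}); and the preceding Lemma \ref{lemmasnormalizesthecharacterrhoM} establishes exactly that ${}^{\widetilde{s}}\!\rho_{M^0} = \rho_{M^0}$. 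Hence $\widetilde{s} \in N(\rho_{M^0})$ as well.

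Putting the two verifications together gives the inclusion $\{1,\widetilde{s}\} \ltimes M^0(F) \subset N(\rho_{M^0})$, and combined with the already-observed decomposition $N_{G^0}(M^0)(F) = \{1,\widetilde{s}\}\ltimes M^0(F)$ and the trivial inclusion $N(\rho_{M^0}) \subset N_{G^0}(M^0)(F)$ from the definition, we conclude $N(\rho_{M^0}) = N_{G^0}(M^0)(F)$. There is no genuine obstacle here; the only substantive computation is the character-normalization statement, which is handled by the preceding lemma and depends on the identities $\eta^2 \circ N_{E_2/F}|_{\cO_{E_2}^\times} = 1$ and $\eta \circ N_{E_4/F}|_{\cO_{E_4}^\times} = 1$ proved there.
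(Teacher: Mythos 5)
Your proof is correct and follows essentially the same route as the paper: the inclusion $N(\rho_{M^0}) \subset N_{G^0}(M^0)(F)$ is definitional, $M^0(F)$ normalizes $\rho_{M^0}$ because $M^0$ is a torus so conjugation acts trivially on $K_{M^0}$, and $\widetilde{s} \in N(\rho_{M^0})$ is supplied by Lemma~\ref{lemmasnormalizesthecharacterrhoM}. The only difference is that you spell out a few more of the easy verifications.
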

\begin{proof}
The claim $N(\rho_{M^0}) \subset N_{G^0}(M^0)(F)$ follows from the definition of $N(\rho_{M^0})$.
We will prove the reverse inclusion.
According to Lemma~\ref{lemmasnormalizesthecharacterrhoM}, we have $\widetilde{s} \in N(\rho_{M^0})$.
Moreover, since $M^0$ is a torus, the conjugate action of $M^0(F)$ on $K_{M^0}$ is trivial.
Thus, we conclude that the group $M^0(F)$ normalizes the character $\rho_{M^0}$.
\end{proof}

To describe the structure of the group $W(\rho_{M^0})$,
we define the elements $\widetilde{s}' \in N(\rho_{M^0})$ and $\widetilde{z} \in M^0(F)$ by
$
\widetilde{s}' = \left(
\left(\begin{smallmatrix}
0 & \varpi_{E_{2}}^{-1} \\
-\varpi_{E_{2}} & 0
\end{smallmatrix}\right), 1
\right) 
$ 
and $\widetilde{z} = \left(
\zeta \varpi_{E_{2}}, \varpi_{E_{2}}, \varpi_{E_{4}}^{-2}
\right)$.
Let $s$, $s'$, and $z$ be the images of $\widetilde{s}$, $\widetilde{s}'$, and $\widetilde{z}$ in $W(\rho_{M^0})$, respectively.
When $K^0=G^0(F)_{x_0,0}$, we also set $\widetilde \epsilon_{M^0} := (-1, 1, 1) \in M^0(F)_{x_0} \smallsetminus M^0(F)_{x_0,0}$ and let $\epsilon_{M^0}$ denote the image of $\widetilde \epsilon_{M^0}$ in $W(\rho_{M^0})$.

\begin{corollary}
\label{cor:W-as-lattice-with-action}
We have
\(
W(\rho_{M^0}) = \{1, s\} \ltimes ( M^0(F) / K_{M^0} ).
\)
\end{corollary}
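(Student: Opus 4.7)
My plan is to derive the corollary directly from Proposition \ref{propositionstructureofNrhoMSL8example} by taking the quotient by $K_{M^0}$, so the task reduces to verifying that the semidirect product decomposition on the left survives the quotient.

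First I would recall that, by definition, $W(\rho_{M^0}) = N(\rho_{M^0})/K_{M^0}$, and invoke Proposition \ref{propositionstructureofNrhoMSL8example} to write $N(\rho_{M^0}) = \{1,\widetilde{s}\} \ltimes M^0(F)$. Since $K_{M^0} \subset M^0(F)$, it suffices to check that $K_{M^0}$ is a normal subgroup of $N(\rho_{M^0})$: the group $M^0(F)$ is abelian (as $M^0$ is a torus), so it normalizes $K_{M^0}$ automatically, and $\widetilde{s}$ normalizes $K_{M^0}$ by the observation recorded immediately before Lemma \ref{lemmasnormalizesthecharacterrhoM}.

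Next, I would pass to the quotient. The image of $M^0(F)$ in $W(\rho_{M^0})$ is the normal subgroup $M^0(F)/K_{M^0}$, and the image of $\{1,\widetilde{s}\}$ is the subgroup $\{1,s\}$. To see that the latter is really a subgroup of order (at most) two giving a complement, I would compute
\[
\widetilde{s}^{\,2} = \left(
\bigl(\begin{smallmatrix} -1 & 0 \\ \phm 0 & -1 \end{smallmatrix}\bigr),\, 1
\right)
= (-1,-1,1) \in \widetilde{K}_{M^0},
\]
and observe that $(-1,-1,1)$ satisfies both norm conditions in \eqref{descriptionofKM0} since $N_{E_2/F}(1) N_{E_4/F}(1) = 1$ and $(1 \bmod \fp_{E_2}) \cdot (1 \bmod \fp_{E_4})^2 = 1$. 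Hence $\widetilde{s}^{\,2} \in K_{M^0}$, so $s$ has order (dividing) two, and since $\widetilde{s} \notin M^0(F)$ the coset $s$ lies outside $M^0(F)/K_{M^0}$. Combining these facts with the conjugation action of $s$ on $M^0(F)/K_{M^0}$ inherited from the action of $\widetilde{s}$ on $M^0(F)$, the decomposition $W(\rho_{M^0}) = \{1,s\}\ltimes(M^0(F)/K_{M^0})$ follows.

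There is essentially no serious obstacle here; the only thing to watch is the bookkeeping that $K_{M^0}$ is normal in $N(\rho_{M^0})$ and that $\widetilde{s}^{\,2}$ lies in $K_{M^0}$ so that the splitting of the short exact sequence descends to the quotient. Both are quick verifications using the explicit description \eqref{descriptionofKM0} of $K_{M^0}$ and the explicit formula for $\widetilde{s}$.
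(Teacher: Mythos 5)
Your proof is correct and follows the same approach as the paper, which simply notes that the corollary is immediate from Proposition~\ref{propositionstructureofNrhoMSL8example} by passing to the quotient by $K_{M^0}$. Your additional observation that $\widetilde{s}^{\,2}=(-1,-1,1)\in K_{M^0}$ is a useful refinement: while $\{1,\widetilde{s}\}$ is not a subgroup of $N(\rho_{M^0})$ (since $\widetilde{s}^{\,2}\neq 1$ there), your computation shows that $\{1,s\}$ genuinely is a subgroup of order two in $W(\rho_{M^0})$, so the semidirect product in the corollary is an honest internal one.
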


\begin{proof}
This is immediate from Proposition \ref{propositionstructureofNrhoMSL8example}.
\end{proof}
\begin{proposition}
\label{propositionstructureofWrhoMSL8example}
We have 
\[
W(\rho_{M^0}) =
\begin{cases}
 \langle
s, s'
\rangle \times \langle z \rangle \phantom{,\epsilon_{M^0}}
\simeq W_{\mathrm{aff}} (\widetilde A_1) \times \bZ & (K^0 = G^0(F)_{x_0}), \\
 \langle
s, s'
\rangle \times \langle z, \epsilon_{M^0} \rangle
\simeq W_{\mathrm{aff}} (\widetilde A_1) \times \bZ \times \bZ/2\bZ& (K^0 = G^0(F)_{x_0,0}),
\end{cases}
\]
where $W_{\mathrm{aff}}(\widetilde A_1)$ denotes the affine Weyl group of the affine root system of type $\widetilde A_1$, i.e., the affine Weyl group with two simple reflections and no relation between them.
\end{proposition}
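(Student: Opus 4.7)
The plan is to use Corollary~\ref{cor:W-as-lattice-with-action} to reduce the problem to understanding $M^{0}(F)/K_{M^{0}}$ as a $\{1,s\}$-module, and then to identify natural generators. First I would analyze the valuation map $v\colon M^{0}(F)\to\bZ^{3}$, $(x,y,z)\mapsto(\ord^{\norm}_{E_{2}}(x),\ord^{\norm}_{E_{2}}(y),\ord^{\norm}_{E_{4}}(z))$. Using $N_{E_{2}/F}(\varpi_{E_{2}})=\varpi_{F}$ and $N_{E_{4}/F}(\varpi_{E_{4}})=\zeta\varpi_{F}$, the defining norm-one condition on $M^{0}$ forces $a+b+c=0$, and the extra scalar $\zeta^{c}$ coming from $N_{E_{4}/F}(\varpi_{E_{4}}^{c})$ must be absorbed by a product of unit-norms from $E_{2}^\times$ and $E_{4}^\times$. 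The identities
\[
N_{E_{2}/F}(\cO_{E_{2}}^{\times})=(1+\fp_{F})\langle\zeta^{2}\rangle,\qquad N_{E_{4}/F}(\cO_{E_{4}}^{\times})=(1+\fp_{F})\langle\zeta^{4}\rangle
\]
(already exploited in the proof of Lemma~\ref{lemmasnormalizesthecharacterrhoM}) then show that $\zeta^{c}$ is a square in $\ff^{\times}$ iff $c$ is even; so the image of $v$ is the rank-$2$ lattice $\{(a,b,c):a+b+c=0,\,c\in 2\bZ\}$, freely generated by $(1,-1,0)$ and $(1,1,-2)$.

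Second, I would exhibit lifts of these basis vectors. The element $\widetilde{s}'$ differs from $\widetilde{s}$ by $(\varpi_{E_{2}},\varpi_{E_{2}}^{-1},1)\in M^{0}(F)$, so setting $m:=[(\varpi_{E_{2}},\varpi_{E_{2}}^{-1},1)]$ in $M^{0}(F)/K_{M^{0}}$ yields $s'=s\cdot m$ and a lift of $(1,-1,0)$; meanwhile $z$ lifts $(1,1,-2)$ by construction. The kernel of $v$ is $M^{0}(F)\cap(\cO_{E_{2}}^{\times}\times\cO_{E_{2}}^{\times}\times\cO_{E_{4}}^{\times})$. When $K^{0}=G^{0}(F)_{x_{0}}$, \eqref{descriptionofKM0} identifies this kernel with $K_{M^{0}}$, so $M^{0}(F)/K_{M^{0}}=\langle m\rangle\oplus\langle z\rangle\cong\bZ^{2}$. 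When $K^{0}=G^{0}(F)_{x_{0},0}$, reducing the norm relations modulo $\fp_{F}$ gives $(xy\bmod\fp_{E_{2}})^{2}(z\bmod\fp_{E_{4}})^{4}=1$ in $\ff^{\times}$, so the further obstruction $(xy)(z^{2})\in\{\pm 1\}$ takes both values and the kernel modulo $K_{M^{0}}$ is $\langle\epsilon_{M^{0}}\rangle\cong\bZ/2\bZ$; hence $M^{0}(F)/K_{M^{0}}\cong\bZ^{2}\oplus\bZ/2\bZ$.

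Third, I would compute the action of $s$. A direct matrix computation shows that conjugation by $\widetilde{s}$ swaps the two diagonal entries in the $\GL_{2}(E_{2})$-component and fixes the $E_{4}^{\times}$-component, so on valuations $s$ acts by $(a,b,c)\mapsto(b,a,c)$. Therefore $sms^{-1}=m^{-1}$, $szs^{-1}=z$, and a direct check gives $s\epsilon_{M^{0}}s^{-1}=\epsilon_{M^{0}}$ (using $(1,-1,1)\equiv(-1,1,1)\pmod{K_{M^{0}}}$). Consequently $\langle z\rangle$ and $\langle\epsilon_{M^{0}}\rangle$ are central in $W(\rho_{M^{0}})$, while $\{1,s\}\ltimes\langle m\rangle$ is the infinite dihedral group. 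The relations $s^{2}=1$ and $s'^{2}=1$ follow from $\widetilde{s}^{2}=(-1,-1,1)\in K_{M^{0}}$ and a similar computation $\widetilde{s}'^{2}=(-1,-1,1)\in K_{M^{0}}$, so $\langle s,s'\rangle\cong W_{\mathrm{aff}}(\widetilde{A}_{1})$.

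Putting the pieces together yields the asserted direct product decompositions. The main obstacle is the second step, specifically the parity constraint $c\in 2\bZ$ in the image of $v$: without it one would get a subgroup of index $2$ in the wrong lattice, and the whole structure result would be off. All remaining verifications are routine conjugation and matrix computations.
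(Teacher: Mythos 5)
Your proof is correct and follows essentially the same route as the paper's: reduce via Corollary~\ref{cor:W-as-lattice-with-action} to analyzing $M^0(F)/K_{M^0}$, map it to $\bZ^3$ by the normalized valuations, use the norm conditions (including the $\langle\zeta^2\rangle$, $\langle\zeta^4\rangle$ computations from Lemma~\ref{lemmasnormalizesthecharacterrhoM}) to pin down the image as the rank-two lattice $\{(a,b,c):a+b+c=0,\ c\in2\bZ\}$, identify the lattice generators with $ss'$ (your $m$) and $z$, and then assemble. Your $m=[(\varpi_{E_2},\varpi_{E_2}^{-1},1)]$ is the same element as the paper's $ss'$ once one uses $s^2=1$, so the generator sets agree; you simply spell out a few of the checks (the $s$-action $(a,b,c)\mapsto(b,a,c)$, $\widetilde{s}^2=\widetilde{s}'^2=(-1,-1,1)\in K_{M^0}$, and the identification of $\ker v$ modulo $K_{M^0}$ with $\langle\epsilon_{M^0}\rangle$ via the mod-$\fp_F$ reduction of the norm equation) that the paper compresses into ``one can check.''
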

\begin{proof}
First, we consider the case where $K^0 = G^0(F)_{x_0}$.
We define the isomorphism
\[
H_{M^0} \colon \tM^0(F) / \widetilde{K}_{M^0}  \rightarrow \mathbb{Z}^{3}
\]
by
\[
(x, y, z) \widetilde{K}_{M^0}  \mapsto (\ord^{\norm}_{E_{2}}(x), \ord^{\norm}_{E_{2}}(y), \ord^{\norm}_{E_{4}}(z)).
\]
The definition of $M^0$ implies that an element $(n_{1}, n_{2}, n_{3}) \in \mathbb{Z}$ is contained in the image of the subgroup
\[
M^0(F) / K_{M^0} \simeq M^0(F) \cdot \widetilde{K}_{M^0}  / \widetilde{K}_{M^0} 
\]
of $\tM^0(F) / \widetilde{K}_{M^0}$
if and only if
\begin{equation}
\label{conditionaboutnormmaps}
N_{E_{2}/F}(\varpi_{E_{2}}^{n_{1} + n_{2}}) \cdot N_{E_{4} / F}(\varpi_{E_{4}}^{n_{3}}) \in N_{E_{2}/F}(\cO_{E_{2}}^{\times}) \cdot N_{E_{4}/F}(\cO_{E_{4}}^{\times}).
\end{equation}
The definition of $E_{2}$ and $E_{4}$ implies that \eqref{conditionaboutnormmaps} is equivalent to the condition that
\[
\varpi_{F}^{n_{1} + n_{2} + n_{3}} \cdot \zeta^{n_{3}} \in (1 + \fp_{F}) \langle \zeta^{2} \rangle.
\]
Thus, we conclude that 
\begin{align*}
H_{M^0} \left(
M^0(F)/K_{M^0}
\right) &= \left\{
(n_{1}, n_{2}, n_{3}) \in \mathbb{Z} \mid n_{1} + n_{2} + n_{3} = 0 \:\text{and} \:  2 \mid n_{3}
\right\} \\
&= \langle 
(1, 1, -2), (1, -1, 0)
\rangle.
\end{align*}
Since
$
H_{M^0}(z) = (1, 1, -2)
$,
$H_{M^0}(s s') = (1, -1, 0)$, and $H_{M^0}$ is an isomorphism, we have that 
\[
M^0(F)/K_{M^0} = \langle 
ss', z
\rangle. 
\]
From Corollary \ref{cor:W-as-lattice-with-action},
we thus obtain that
\[
W(\rho_{M^0}) = \{1, s\} \ltimes \left(
M^0(F) / K_{M^0}
\right) = \langle
s, s', z
\rangle.
\]
One can check that the subgroup $\langle
s, s'
\rangle$ of $W(\rho_{M^0})$ is isomorphic to the affine Weyl group of type $\widetilde{A_{1}}$, and that the element $z$ has infinite order, commutes with the elements $s$ and $s'$, and no non-trivial power of $z$ is contained in the span of $s$ and $s'$.

Next, we consider the case where $K^0 = G^0(F)_{x_0,0}$.
In this case, we have
\[
M^0(F)/K_{M^0} = M^0(F)/M^0(F)_{x_0,0} = \left(
M^0(F)/M^0(F)_{x_0}
\right) \times \langle \epsilon_{M^0} \rangle.
\]
Noting that $\epsilon_{M^0}$ commutes with $s$ and $s'$, the claim follows from the first case.
\end{proof}

\begin{lemma}
\label{lemmacanttakenznscommute}
Let $n_{s}, n_{z} \in N(\rho_{M^0})$
denote lifts of $s$ and $z$.
Then, we have $[n_{s}, n_{z}] \in K_{M^0} \smallsetminus \ker \rho_{M^0}$.
In particular, we have $n_{s} n_{z} \neq n_{z} n_{s}$.
\end{lemma}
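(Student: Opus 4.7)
The plan is to reduce to a direct computation with the specific lifts $\widetilde{s}$ and $\widetilde{z}$, and then observe that the dependence on the general choice of lifts is controlled by factors killed by $\rho_{M^0}$. First, since $s$ and $z$ commute in $W(\rho_{M^0})$ by Proposition~\ref{propositionstructureofWrhoMSL8example}, the commutator $[n_s, n_z]$ automatically lies in the kernel $K_{M^0}$ of the projection $N(\rho_{M^0}) \twoheadrightarrow W(\rho_{M^0})$, which gives the first containment. It remains to show $\rho_{M^0}([n_s, n_z]) \neq 1$.

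Write $n_s = k_1 \widetilde{s}$ and $n_z = k_2 \widetilde{z}$ with $k_1, k_2 \in K_{M^0} \subset M^0(F)$. Expanding the commutator and using that $\widetilde{s}$ normalizes $\tM^0(F)$---so the full product lies in the abelian group $M^0(F)$---the factor $k_1$ cancels and one obtains
\[
[n_s, n_z] = (\widetilde{s} k_2 \widetilde{s}^{-1} \cdot k_2^{-1}) \cdot (\widetilde{s} \widetilde{z} \widetilde{s}^{-1} \cdot \widetilde{z}^{-1}).
\]
Since conjugation by $\widetilde{s}$ swaps the two $\GL_1(E_2)$ factors sitting inside $\GL_2(E_2)$, a direct computation gives $\widetilde{s} \widetilde{z} \widetilde{s}^{-1} \cdot \widetilde{z}^{-1} = (\zeta^{-1}, \zeta, 1)$, and writing $k_2 = (a, b, c)$ yields $\widetilde{s} k_2 \widetilde{s}^{-1} \cdot k_2^{-1} = (b/a, a/b, 1)$. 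Hence $[n_s, n_z] = (b\zeta^{-1}/a,\, a\zeta/b,\, 1)$, which visibly lies in $K_{M^0}$.

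Evaluating $\rho_{M^0}$ via its definition gives
\[
\rho_{M^0}([n_s, n_z]) = (\eta \circ N_{E_2/F})(a\zeta/b) = \eta(N_{E_2/F}(a/b)) \cdot \eta(\zeta^2),
\]
and $\eta(\zeta^2) = \eta(\zeta)^2 = -1$ by the definition of $\eta$. The main obstacle is to check that the remaining factor $\eta(N_{E_2/F}(a/b))$ equals $+1$ for every admissible $(a, b, c)$, despite the fact that $a$ and $b$ are individually unconstrained. Equation~\eqref{etasquareE1trivial} shows $\eta(N_{E_2/F}(b))^2 = 1$, so $\eta(N_{E_2/F}(a/b)) = \eta(N_{E_2/F}(ab))$; the defining relation for $K_{M^0}$ in \eqref{descriptionofKM0} gives $N_{E_2/F}(ab) = N_{E_4/F}(c)^{-1}$; and equation~\eqref{etaE2trivial} finishes the job with $\eta(N_{E_4/F}(c)) = 1$. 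Consequently $\rho_{M^0}([n_s, n_z]) = -1$, so $[n_s, n_z] \in K_{M^0} \smallsetminus \ker \rho_{M^0}$, and in particular $n_s n_z \neq n_z n_s$.
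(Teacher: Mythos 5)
Your proof is correct and follows essentially the same strategy as the paper: write the lifts as $\widetilde{s},\widetilde{z}$ times $K_{M^0}$-corrections, use the abelianness of $M^0(F)$ to isolate the term $\widetilde{s}\widetilde{z}\widetilde{s}^{-1}\widetilde{z}^{-1}=(\zeta^{-1},\zeta,1)$, and observe that $\rho_{M^0}$ sends it to $\eta(\zeta^{2})=-1$. The one difference is how the residual factor is disposed of. The paper kills the correction terms ${}^{\widetilde{s}}[k,\widetilde{z}]$ and ${}^{\widetilde{z}}[\widetilde{s}k,k']$ abstractly, using that $\widetilde{s},\widetilde{z}\in N(\rho_{M^0})$ normalize $\rho_{M^0}$ (and that $M^0(F)$ is abelian). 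You instead compute $\widetilde{s}k_{2}\widetilde{s}^{-1}k_{2}^{-1}=(b/a,\,a/b,\,1)$ explicitly and verify $\eta(N_{E_{2}/F}(a/b))=1$ by hand from \eqref{descriptionofKM0}, \eqref{etasquareE1trivial}, and \eqref{etaE2trivial}. That verification is in fact a direct re-derivation of a special case of Lemma~\ref{lemmasnormalizesthecharacterrhoM}, which you could have cited instead: $\rho_{M^0}\bigl({}^{\widetilde{s}}k_{2}\,k_{2}^{-1}\bigr)=\bigl({}^{\widetilde{s}^{-1}}\!\rho_{M^0}\bigr)(k_{2})\,\rho_{M^0}(k_{2})^{-1}=1$ precisely because $\widetilde{s}$ normalizes $\rho_{M^0}$. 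Either route is complete and correct; the paper's is a bit more economical because it reuses the lemma rather than recomputing it.
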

\begin{proof}
We write $n_{s} = \widetilde{s} k$ and $n_{z} = \widetilde{z} k'$
for some $k, k' \in K_{M^0}$.
Then, we have
\begin{align*}
[n_{s}, n_{z}] &= [\widetilde{s} k, \widetilde{z} k'] \\
&= [\widetilde{s} k, \widetilde{z}] \cdot ^{\widetilde{z}}\![\widetilde{s} k, k'] \\
&= {^{\widetilde{s}}[k, \widetilde{z}]} \cdot [\widetilde{s}, \widetilde{z}] \cdot ^{\widetilde{z}}\![\widetilde{s} k, k'].
\end{align*}
Since $\widetilde{s}, \widetilde{z} \in N(\rho_{M^0})$ normalize $\rho_{M^0}$, we have
\[
^{\widetilde{s}}[k, \widetilde{z}], ^{\widetilde{z}}\![\widetilde{s} k, k'] \in \ker \rho_{M^0}.
\]
On the other hand, we have 
\[
[\widetilde{s}, \widetilde{z}] = (\zeta^{-1}, \zeta, 1) \in K_{M^0} \smallsetminus \ker \rho_{M^0}
\]
since
\[
\rho_{M^0}((\zeta^{-1}, \zeta, 1)) = \left(
\eta \circ N_{E_{2}/F}
\right)(\zeta) = \eta(\zeta^{2}) = -1.
\]
Thus, we conclude that $[n_{s}, n_{z}] \in K_{M^0} \smallsetminus \ker \rho_{M^0}$.
\end{proof}

\begin{corollary}
\label{corollaryrhoMdoesnotextend}
The character $\rho_{M^0}$ does not extend to the group $N(\rho_{M^0})$.
\end{corollary}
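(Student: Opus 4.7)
The plan is to derive a direct contradiction from Lemma~\ref{lemmacanttakenznscommute}. Suppose, for contradiction, that $\rho_{M^0}$ extends to a character $\widetilde{\rho}$ of $N(\rho_{M^0})$. Since any character takes values in the abelian group $\bC^{\times}$, the extension $\widetilde{\rho}$ must be trivial on every commutator in $N(\rho_{M^0})$. In particular, for the lifts $n_{s}, n_{z} \in N(\rho_{M^0})$ of $s$ and $z$ from Lemma~\ref{lemmacanttakenznscommute}, we would have $\widetilde{\rho}([n_{s}, n_{z}]) = 1$.

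On the other hand, Lemma~\ref{lemmacanttakenznscommute} tells us that $[n_{s}, n_{z}]$ lies in $K_{M^0}$, so $\widetilde{\rho}([n_{s}, n_{z}]) = \rho_{M^0}([n_{s}, n_{z}])$ because $\widetilde{\rho}$ restricts to $\rho_{M^0}$ on $K_{M^0}$. But the same lemma asserts that $[n_{s}, n_{z}] \notin \ker \rho_{M^0}$, so $\rho_{M^0}([n_{s}, n_{z}]) \neq 1$, which is the desired contradiction.

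There is no real obstacle here: Lemma~\ref{lemmacanttakenznscommute} was designed precisely to supply the non-trivial commutator that obstructs an abelian extension, and the argument is a single formal step. The proof is therefore a two-line deduction from the lemma.
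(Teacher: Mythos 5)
Your proof is correct and is essentially the same argument as the paper's: both deduce a contradiction from the fact that a character of $N(\rho_{M^0})$ must kill all commutators, while Lemma~\ref{lemmacanttakenznscommute} exhibits a commutator $[n_s, n_z]$ lying in $K_{M^0}$ but outside $\ker\rho_{M^0}$. The only cosmetic difference is that the paper applies the lemma to the specific lifts $\widetilde s, \widetilde z$, whereas you phrase it for arbitrary lifts; this makes no difference since the lemma covers all lifts.
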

\begin{proof}
Suppose that $\rho_{M^0}$ extends to a character $\rho_{M^0}^{\dagger}$of $N(\rho_{M^0})$.
Then, since $\widetilde{s}, \widetilde{z} \in N(\rho_{M^0})$,
we have $[\widetilde{s}, \widetilde{z}] \in \ker \rho_{M^0}^{\dagger}$,
which contradicts Lemma~\ref{lemmacanttakenznscommute}.
\end{proof}

Our decomposition of $W(\rho_{M^0})$ given in
Proposition \ref{propositionstructureofWrhoMSL8example}
gives rise to a length function on this group, the standard length function on extended affine Weyl groups. More precisely, we start with the length function $\plength$ on $\langle
s, s'
\rangle$ with respect to the generators $\{s, s'\}$ of $\langle
s, s'
\rangle$.
We extend $\plength$ to $W(\rho_{M^0})$ by
\[
\begin{cases}
\plength(w z^{n}) := \plength(w) & \text{ when } K^0 = G^0(F)_{x_0}, \\
\plength(w z^{n} \epsilon_{M^0}^{t}) := \plength(w) & \text{ when } K^0 = G^0(F)_{x_0,0}
\end{cases}
\]
for $w \in \langle
s, s'
\rangle$, $n \in \mathbb{Z}$, and $t \in \{0, 1\}$.

\begin{remark}
\label{rmk:no-good-coset-reps}
Suppose that $K^0 = G^0(F)_{x_0,0}$.
According to \cite[Proposition~5.2]{Morris}, we can take a lift $n_{w} \in N(\rho_{M^0})$ for each $w \in W(\rho_{M^0})$ such that if $\plength(w_{1} w_{2}) = \plength(w_{1}) + \plength(w_{2})$, then we have $n_{w_{1} w_{2}} = n_{w_{1}} n_{w_{2}}$.
However, this statement is false in general,
and Lemma~\ref{lemmacanttakenznscommute} provides a counterexample. 
The failure of \cite[Proposition~5.2]{Morris} does not affect Morris's main result \cite[Theorem 7.12]{Morris} as his proof can easily be adapted to circumvent the use of such good coset representatives. Alternatively, the recent proof of the more general result \cite[Section \ref{HAI-sec:depth-zero}]{HAI} also does not rely on a choice of representatives.
On the other hand, \cite[Remark 7.12(a)]{Morris}, which states that the the $2$-cocycle $\muT$ is trivial if the representation $\rho^{0}$ is a character, does depend on such representatives, and the example covered in this paper shows that \cite[Remark 7.12(a)]{Morris} is not true in general (see Corollary~\ref{corollaryanexampleofheckealgebrawithnon-trivialtwococycle} below).
\end{remark}

Although 
Proposition \ref{propositionstructureofWrhoMSL8example}
decomposes $W(\rho_{M^0})$ into a product
of an affine Weyl group $\langle s,s' \rangle$
and a complement ($\langle z \rangle$ and $\langle z, \epsilon_{M^0} \rangle$ for $K^0 = G^0(F)_{x_0}$ and $K^0 = G^0(F)_{x_0,0}$, respectively)
the decomposition of $W(\rho_{M^0})$ provided in 
\cite{HAI} (and also in \cite[7.3]{Morris}) is different, and comes from a different length function, where more elements have length zero,
which is denoted by $\flength$ and defined in \cite[Definition \ref{HAI-definitionlength}]{HAI}.
The subgroup $\Wzeroz$ of $W(\rho_{M^0})$ is defined by
\[
	\Wzeroz := \Bigl\{
	w \in W(\rho_{M^0}) \Bigr. \,\Bigl | \, \flength(w) = 0
	\Bigr\}.
\]

\begin{proposition}
\label{propositionWheartisOmegainSL8example}
We have $W(\rho_{M^0}) = \Omega(\rho_{M^0})$.
\end{proposition}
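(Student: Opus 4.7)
The plan is to show that the affine reflection subgroup $W_{\aff}$ attached to $W(\rho_{M^0})$ by the $\flength$-decomposition of \cite{HAI} is trivial. Since \cite{HAI} provides a decomposition $W(\rho_{M^0}) = W_{\aff} \rtimes \Wzeroz$ with $\Wzeroz$ the set of $\flength$-length zero elements, the triviality of $W_{\aff}$ immediately yields $W(\rho_{M^0}) = \Wzeroz = \Omega(\rho_{M^0})$.

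By the construction of $\flength$ in \cite{HAI} (paralleling the Howlett--Lehrer/Morris analysis), the subgroup $W_{\aff}$ is generated by those reflections $s_\psi$ attached to affine roots $\psi$ of $G^{0}$ (relative to $A_{\tT}$ and the point $x_{0}$) for which $\rho_{M^0}$ is trivial on the compact part of the image of the coroot $\psi^\vee$ — i.e., those reflections producing non-trivial quadratic relations in the Hecke algebra. Now, the relative root system of $G^0$ with respect to $A_{\tT}$ contains only the two roots $\pm\alpha$ arising from the $\GL_2$-factor of $\Res_{E_2/F}(\GL_2)$, whose coroot is $\alpha^\vee\colon t \mapsto (t, t^{-1}, 1) \in \widetilde M^0(F)$. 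Every reflection of $W(\rho_{M^0})$ — in particular the generators $s$ and $s'$ of Proposition~\ref{propositionstructureofWrhoMSL8example} — lies above $\pm \alpha$, so it suffices to check the criterion for $\alpha$.

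The core computation is that, for $t \in \cO_{E_2}^\times$,
\[
(\rho_{M^0} \circ \alpha^\vee)(t) \;=\; \eta\bigl(N_{E_2/F}(t)\bigr)^{-1}.
\]
Using $N_{E_2/F}(\cO_{E_2}^\times) = (1+\fp_F)\langle \zeta^2 \rangle$ (already invoked in the paper) together with $\eta(\zeta^2) = (\sqrt{-1})^2 = -1$, we see that this character has image $\{\pm 1\}$ on $\cO_{E_2}^\times$, and in particular is non-trivial: taking $t = \zeta$ gives value $-1$. Consequently no affine reflection lying above $\pm\alpha$ belongs to $W_{\aff}$, so $W_{\aff} = \{1\}$.

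The one step requiring care is matching the criterion used above with the precise definition of $\flength$ in \cite[Definition~\ref{HAI-definitionlength}]{HAI}; this is a matter of unpacking how \cite{HAI} builds the affine reflection subgroup from the residual reductive quotient, after which the proposition reduces to the short coroot computation above. Everything else follows formally from the definition $\Wzeroz = \{w : \flength(w) = 0\}$ and the decomposition results of \cite{HAI}.
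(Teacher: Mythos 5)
Your argument takes a genuinely different route from the paper's, and it is correct in outline, though it leaves one step unverified that the paper's proof does not have to worry about. The paper argues directly at the level of the Hecke algebra: it shows that for $w_1, w_2$ with $\plength$ additive, the convolution product $\varphi_{w_1}*\varphi_{w_2}$ is supported on $K^0 w_1 w_2 K^0$ (a double-coset containment argument using the larger group $\widetilde K^0$), and then it computes the convolutions $\varphi_s * \varphi_s$ and $\varphi_{s'}*\varphi_{s'}$ explicitly, showing that the coefficient of $\varphi_s$ (resp.\ $\varphi_{s'}$) vanishes because $\sum_{x\in\cO_F^\times/(1+\fp_F)}\eta(x^2)=0$. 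That last sum vanishes precisely because $\eta^2$ is non-trivial on $\cO_F^\times$, which is exactly the fact your coroot computation isolates: $(\rho_{M^0}\circ\alpha^\vee)(t)=\eta(N_{E_2/F}(t))^{-1}$ reduces, on the residue field of the ramified extension $E_2/F$, to $t\mapsto\eta(t^2)^{-1}$, a non-trivial character. So the two proofs hinge on the same character-theoretic fact; yours reaches it through the Howlett--Lehrer criterion for the residual root system, while the paper reaches it through a direct convolution calculation.

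What your approach buys is conceptual clarity: the triviality of $W_{\aff}$ is immediate once one knows the criterion, and the computation reduces to a one-line evaluation of a character on a coroot. What the paper's approach buys is self-containedness: it never needs to unpack the definition of $\flength$ in \cite[Definition \ref{HAI-definitionlength}]{HAI} or the construction of the affine root system at each vertex of the facet; it checks the defining property of $\Omega(\rho_{M^0})$ (namely that all convolution products are one-dimensional) directly. The caveat you flag yourself is the real one: you assert, but do not verify, that the affine reflection subgroup attached to $\flength$ in \cite{HAI} is generated by reflections $s_\psi$ with $\rho_{M^0}\circ\psi^\vee$ trivial on the relevant residue-field torus. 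For $M^0$ a torus and $\rho_{M^0}$ a character this is indeed the Howlett--Lehrer criterion at each vertex of the closure of the facet through $x_0$, but in \cite{HAI} (following Morris) the group $W_{\aff}$ is assembled vertex-by-vertex from the parameters of finite Hecke algebras, and matching that construction to your clean coroot criterion is a non-trivial unwinding that your proof delegates to the reader. The paper's proof sidesteps this entirely. You should also make explicit, as the paper does, that the argument must handle both choices $K^0 = G^0(F)_{x_0}$ and $K^0 = G^0(F)_{x_0,0}$, and that the extra generator $\epsilon_{M^0}$ in the second case contributes only length-zero elements; your remark that ``every reflection lies above $\pm\alpha$'' covers this implicitly, but it deserves a sentence.
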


That is, all elements of $W(\rho_{M^0})$ have length zero
with respect to $\flength$.

\begin{proof}
For each $w \in W(\rho_{M^0})$, we fix a non-zero element $\varphi_{w} \in \cH(G^0(F), (K^0, \rho^0))$
with support in ${K}^0 w {K}^0$.
To prove the proposition, it suffices to show that for any $w_{1}, w_{2} \in W(\rho_{M^0})$, we have $\varphi_{w_{1}} * \varphi_{w_{2}} \in \mathbb{C} \cdot \varphi_{w_{1} w_{2}}$.
According to Proposition~\ref{propositionstructureofWrhoMSL8example}, we have
$
W(\rho_{M^0}) = \langle
s, s'
\rangle \times \langle z \rangle
$ or $
W(\rho_{M^0}) = \langle
s, s'
\rangle \times \langle z, \epsilon_{M^0} \rangle
$,
and we can check easily that if $w_{1}, w_{2} \in W(\rho_{M^0})$ satisfy $\plength(w_{1} w_{2}) = \plength(w_{1}) + \plength(w_{2})$, then we have 
\[
\widetilde{K}^0 w_{1} \widetilde{K}^0 w_{2} \widetilde{K}^0 = \widetilde{K}^0 w_{1} w_{2} \widetilde{K}^0.
\]
Since we have $\widetilde{K}^0 = \widetilde{K}_{M^0} \cdot K^0$, and the group $N(\rho_{M^0})$ normalizes the group $\widetilde{K}_{M^0}$, we also obtain that 
\begin{align*}
K^0 w_{1} K^0 w_{2} K^0 & \subseteq \widetilde{K}^0 w_{1} \widetilde{K}^0 w_{2} \widetilde{K}^0 \cap \SL_{8}(F) \\
&= \widetilde{K}^0 w_{1} w_{2} \widetilde{K}^0 \cap \SL_{8}(F) \\
&= \widetilde{K}^0 w_{1} w_{2} \widetilde{K}_{M^0} \cdot K^0 \cap \SL_{8}(F) \\
&= \widetilde{K}^0 \cdot  \widetilde{K}_{M^0}  w_{1} w_{2} K^0 \cap \SL_{8}(F) \\
&= \widetilde{K}^0 w_{1} w_{2} K^0 \cap \SL_{8}(F) \\
&= \left(
\widetilde{K}^0 \cap \SL_{8}(F)
\right) w_{1} w_{2} K^0 \\
&= K^0 w_{1} w_{2} K^0.
\end{align*}
In particular, in this case, we obtain that $\varphi_{w_{1}} * \varphi_{w_{2}} \in \mathbb{C} \cdot \varphi_{w_{1} w_{2}}$.
Thus, to prove the proposition, it now suffices to show that $\varphi_{s} * \varphi_{s} \in \mathbb{C} \cdot \varphi_{1}$ and $\varphi_{s'} * \varphi_{s'} \in \mathbb{C} \cdot \varphi_{1}$.
Similar calculations as above imply that $K^0 s K^0 s K^0 = K^0 \cup K^0 s K^0$ and $K^0 s' K^0 s' K^0 = K^0 \cup K^0 s' K^0$.
Hence, we obtain that
\[
\varphi_{s} * \varphi_{s} \in \mathbb{C} \cdot \varphi_{s} \oplus \mathbb{C} \cdot \varphi_{1}
\qquad
\text{and}
\qquad
\varphi_{s'} * \varphi_{s'} \in \mathbb{C} \cdot \varphi_{s'} \oplus \mathbb{C} \cdot \varphi_{1}.
\]
Thus, it suffices to prove that $\left(
\varphi_{s} * \varphi_{s}
\right)(\widetilde{s}) = \left(
\varphi_{s'} * \varphi_{s'}
\right)(\widetilde{s}') = 0$.
We take a set of representatives for 
\(
K^0 / \left(
K^0 \cap {^{\widetilde{s}}\!K^0}
\right)
\)
as
\(
\left\{
u(x) \mid x \in \cO_{E_{2}} / \fp_{E_{2}}
\right\}
\),
where
$
u(x) = \left(
\begin{pmatrix}
1 & x \\
0 & 1
\end{pmatrix}, 1
\right)
$.
Then, we can calculate the convolution product $\left(
\varphi_{s} * \varphi_{s}
\right)(\widetilde{s})$ as
\begin{align*}
\left(
\varphi_{s} * \varphi_{s}
\right)(\widetilde{s}) 
&= \sum_{h \in K^0 \widetilde{s} K^0 / K^0} \varphi_{s}(h) \cdot \varphi_{s}(h^{-1}\widetilde{s}) \\
&= \sum_{k \in K^0 / \left(
K^0 \cap {^{\widetilde{s}}\!K^0}
\right)
} \varphi_{s}(k\widetilde{s}) \cdot \varphi_{s}(\widetilde{s}^{-1} k^{-1} s) \\
&= \sum_{x \in \cO_{E_{2}} / \fp_{E_{2}}}  \varphi_{s}(u(x) \widetilde{s}) \cdot \varphi_{s}(\widetilde{s}^{-1} u(-x) \widetilde{s}).
\end{align*}
For $x \in \cO_{E_{2}}$, we have
\begin{align*}
\widetilde{s}^{-1} u(-x) \widetilde{s} 
=
\left(
\begin{pmatrix}
1 & 0 \\
x & 1
\end{pmatrix}, 1
\right).
\end{align*}
Hence, $\widetilde{s}^{-1} u(-0) \widetilde{s} \not \in K^0 \widetilde{s} K^0$ and for $x \in \cO_{E_{2}}^{\times}$, we have
\begin{align*}
\widetilde{s}^{-1} u(-x) \widetilde{s} &= 
\left(
\begin{pmatrix}
-x^{-1} & -1 \\
0 & -x 
\end{pmatrix}
\cdot
\widetilde{s}
\cdot
\begin{pmatrix}
1 & x^{-1} \\
0 & 1
\end{pmatrix}, 1
\right).
\end{align*}
Hence, the definition of $\rho^0$ implies that
\begin{align*}
\left(
\varphi_{s} * \varphi_{s}
\right)(\widetilde{s}) &= \sum_{x \in \cO_{E_{2}} / \fp_{E_{2}}}  \varphi_{s}(u(x) \widetilde{s}) \cdot \varphi_{s}(\widetilde{s}^{-1} u(-x) \widetilde{s}) \\
&= \varphi_{s}(\widetilde{s})^{2} \sum_{x \in \cO_{E_{2}}^{\times} /\left(1+ \fp_{E_{2}}\right)} \left(
\eta \circ N_{E_{2}/F}
\right)(-x) \\
&= \varphi_{s}(\widetilde{s})^{2} \sum_{x \in \cO_{F}^{\times} /\left(1+ \fp_{F}\right)} \eta(x^{2}) = 0,
\end{align*}
where the last equality follows from the fact that the restriction of the character $\eta^{2}$ to $\cO_{F}^{\times}$ is non-trivial.
Similarly, we can prove that $\left(
\varphi_{s'} * \varphi_{s'}
\right)(\widetilde{s}') = 0$.
\end{proof}

We fix a family
$
\cT =
\left\{
T_{n} \in \Hom_{K_{M^0}}\left(
^n\!\rho_{M^0}, \rho_{M^0}
\right)
\right\}_{n \in N(\rho_{M^0})}
$
as in \cite[Choice \ref{HAI-choice:tw}]{HAI}
and define the $2$-cocycle 
\[
\muT \colon W(\rho_{M^0}) \times W(\rho_{M^0}) \rightarrow \bC^{\times}
\]
as in \cite[Notation \ref{HAI-notationofthetwococycle}]{HAI}.

\begin{corollary}
\label{corollaryanexampleofheckealgebrawithnon-trivialtwococycle}
We have an isomorphism
\[
\cH(G^0(F), (K^0, \rho^0)) \simeq \mathbb{C}[W(\rho_{M^0}), \muT],
\]
and the $2$-cocycle $\muT$ is non-trivial.
\end{corollary}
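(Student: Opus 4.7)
The plan is to deduce the isomorphism from the general structure theorem for depth-zero Hecke algebras, and to derive the non-triviality of $\muT$ from Corollary~\ref{corollaryrhoMdoesnotextend}.

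For the isomorphism, I would invoke \cite[Theorem~\ref{HAI-theoremstructureofheckefordepthzero}]{HAI}, which asserts an isomorphism of the shape
\[
\cH(G^0(F),(K^0,\rho^0)) \simeq \bC[\Wzeroz, \muT] \ltimes \cH(W(\rho_{M^0})_{\mathrm{aff}}, q).
\]
Proposition~\ref{propositionWheartisOmegainSL8example} gives $W(\rho_{M^0}) = \Wzeroz$, so the affine Weyl subgroup $W(\rho_{M^0})_{\mathrm{aff}}$ is trivial and the factor $\cH(W(\rho_{M^0})_{\mathrm{aff}}, q)$ reduces to $\bC$. The structure theorem therefore collapses to the twisted group algebra $\bC[W(\rho_{M^0}), \muT]$, proving the first claim.

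For non-triviality of $\muT$, I would argue by contradiction. Suppose $\muT(w_1, w_2) = c(w_1)c(w_2)/c(w_1 w_2)$ for some function $c \colon W(\rho_{M^0}) \to \bC^\times$. Fix a set-theoretic section $w \mapsto n_w$ of $N(\rho_{M^0}) \twoheadrightarrow W(\rho_{M^0})$, and rescale each intertwiner $T_{n_w} \in \cT$ by $c(w)^{-1}$. The trivialization of $\muT$ makes the rescaled family multiplicative in $w$, so the assignment
\[
n_w k \;\longmapsto\; c(w)^{-1} T_{n_w} \cdot \rho_{M^0}(k), \qquad k \in K_{M^0},
\]
yields a character of $N(\rho_{M^0})$ restricting to $\rho_{M^0}$ on $K_{M^0}$, contradicting Corollary~\ref{corollaryrhoMdoesnotextend}.

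The main obstacle is verifying cleanly that a coboundary trivialization of $\muT$ actually produces a well-defined character extension; this amounts to unwinding \cite[Notation~\ref{HAI-notationofthetwococycle}]{HAI} and checking compatibility of the rescaled intertwiners with the multiplicative structure of $N(\rho_{M^0})$ over $K_{M^0}$. As a more concrete backup that avoids this bookkeeping, I would exhibit commuting elements of $W(\rho_{M^0})$ on which $\muT$ fails to be symmetric: by Proposition~\ref{propositionstructureofWrhoMSL8example} the elements $s$ and $z$ commute, while by Lemma~\ref{lemmacanttakenznscommute} their lifts satisfy $\rho_{M^0}([n_s, n_z]) = -1$. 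A direct computation of $\varphi_s * \varphi_z$ versus $\varphi_z * \varphi_s$ (both supported on $K^0 sz K^0 = K^0 zs K^0$) then yields $\muT(s, z)/\muT(z, s) = -1$, and since any coboundary is symmetric on commuting pairs this already forces $\muT$ to represent a non-trivial cohomology class.
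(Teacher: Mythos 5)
Your main argument matches the paper's own proof exactly: invoke the depth-zero structure theorem from HAI, use Proposition~\ref{propositionWheartisOmegainSL8example} to see that the affine part is trivial so the semidirect product collapses to the twisted group algebra $\mathbb{C}[W(\rho_{M^0}),\muT]$, and deduce non-triviality of $\muT$ from Corollary~\ref{corollaryrhoMdoesnotextend}. The paper's proof is just this list of citations with no elaboration, so your unfolding of it is accurate. Your ``backup'' argument --- computing the alternating bicharacter $\muT(s,z)/\muT(z,s)=-1$ on the commuting pair $(s,z)$ via $\rho_{M^0}([n_s,n_z])=-1$ from Lemma~\ref{lemmacanttakenznscommute} --- is in fact the more robust route to non-triviality: the alternating bicharacter on commuting pairs is a genuine cohomological invariant (any coboundary is symmetric there), so it rules out triviality of the class directly and sidesteps precisely the bookkeeping you flag as the delicate step in the contradiction argument, namely verifying that a coboundary trivialization of $\muT$ actually assembles into a character of $N(\rho_{M^0})$ extending $\rho_{M^0}$.
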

\begin{proof}
The corollary follows from
\cite[Theorem \ref{HAI-thm:isomorphismtodepthzero}]{HAI},
Corollary~\ref{corollaryrhoMdoesnotextend},
and
Proposition~\ref{propositionWheartisOmegainSL8example}.
\end{proof}

\bibliographystyle{amsalpha}
\bibliography{ourbib}
\end{document}